\newcommand{\email}[1]{\href{mailto:#1}{#1}}
\newtheorem{theorem}{Theorem}
\newtheorem{proposition}[theorem]{Proposition}
\newtheorem{lemma}[theorem]{Lemma}
\newtheorem{corollary}[theorem]{Corollary}
\theoremstyle{remark}
\newtheorem{remark}[theorem]{Remark}
\theoremstyle{definition}
\newcommand{\ul}[1]{\underline{{#1}}}
\newcommand{\Real}{\mathbb{R}}
\newcommand{\st}{\,:\,}
\DeclareMathOperator{\tr}{tr}
\DeclareMathOperator{\card}{card}
\DeclareMathOperator{\Ker}{Ker}
\DeclareMathOperator{\Image}{Im}
\newcommand{\DIFF}{\mathrm{d}}
\newcommand{\KOSZUL}{\kappa}
\newcommand{\trimmed}{{-}}
\newcommand{\wh}[1]{\widehat{#1}}
\newcommand{\PL}[2]{\mathcal{P}_{#1}\Lambda^{#2}}
\newcommand{\PLtrim}[2]{\mathcal{P}_{#1}^{\trimmed}\Lambda^{#2}}
\newcommand{\lproj}[3]{\pi^{#2}_{#1,#3}}
\newcommand{\ltproj}[3]{\lproj{#1}{\trimmed,#2}{#3}}
\newcommand{\Mh}{\mathcal{M}_h}
\newcommand{\Sh}{\mathcal{S}_h}
\newcommand{\FM}[1]{\Delta_{#1}}
\newcommand{\pf}{{\partial f}}
\newcommand{\dtop}{n}
\newcommand{\ffp}{f\!f'}
\newcommand{\orffp}{\epsilon_{\ffp}}
\newcommand{\uH}[2]{\underline{X}_{r,#2}^{#1}}
\newcommand{\uI}[2]{\underline{I}_{r,#2}^{#1}}
\newcommand{\vvvert}{\vert\kern-0.25ex\vert\kern-0.25ex\vert}
\newcommand{\norm}[2]{\Vert #2\Vert_{#1}}
\newcommand{\seminorm}[2]{| #2 |_{#1}}
\newcommand{\opn}[2]{\vvvert #2\vvvert_{#1}}
\newcommand{\mc}{\mathcal}
\newcommand{\chain}[1]{C_{#1}}
\newcommand{\cochain}[1]{C^{#1}}
\newcommand{\bd}[1]{\partial_{#1}}
\newcommand{\cobd}[1]{\delta^{#1}}
\newcommand{\cycle}[1]{Z_{#1}}
\newcommand{\cspace}[1]{Z^\mathsf{c}_{#1}}
\let\boundary\relax
\newcommand\boundary[1]{B_{#1}}
\newcommand{\inner}[2]{\langle {#1} \, , {#2} \rangle}
\newcommand{\Alt}{\mathrm{Alt}}
\newcommand{\lift}[2]{{\mathcal L}^{#1}_{r,#2}}
\newcommand{\Wbf}{\phi} 
\newcommand{\PLtrimz}[2]{\mathring{\mathcal{P}}_{#1}^{\trimmed}\Lambda^{#2}} 
\newcommand{\restr}[2]{#1|_{#2}} 
\newcommand{\liftc}[2]{{\mathcal C}^{#1}_{r,#2}} 
\DeclareMathOperator{\ctr}{\gamma}
\begin{document}

\title{Conforming lifting and adjoint consistency for the Discrete de Rham complex of differential forms}
\author[1]{Daniele A. Di Pietro}
\author[1,2]{J\'{e}r\^{o}me Droniou}
\author[1]{Silvano Pitassi}
\affil[1]{IMAG, Univ Montpellier, CNRS, Montpellier, France\\
  \email{daniele.di-pietro@umontpellier.fr}, %
  \email{jerome.droniou@cnrs.fr}
  \email{silvano.pitassi@umontpellier.fr}%
}
\affil[2]{School of Mathematics, Monash University, Melbourne, Australia.
}

\maketitle

\begin{abstract}
  Discrete de Rham (DDR) methods provide non-conforming but compatible approximations of the continuous de Rham complex on general polytopal meshes.
  Owing to the non-conformity, several challenges arise in the analysis of these methods. In this work, we design conforming liftings on the DDR spaces, that are right-inverse of the interpolators and can be used to solve some of these challenges. We illustrate this by tackling the question of the global integration-by-part formula. By non-conformity of the discrete complex, this formula involves a residual -- which can be interpreted as a consistency error on the adjoint of the discrete exterior derivative -- on which we obtain, using the conforming lifting, an optimal bound in terms of the mesh size.
  Our analysis is carried out in the polytopal exterior calculus framework, which allows for unified proofs for all the spaces and operators in the DDR complex. Moreover, the liftings are explicitly constructed in finite element spaces on a simplicial submesh of the underlying polytopal mesh, which gives more control on the resulting functions (e.g., discrete trace and inverse inequalities).
  \medskip\\
  \textbf{Key words.} Discrete de Rham complex, polytopal method, conforming lifting, consistency, differential forms, exterior calculus.
  \medskip\\
  \textbf{MSC2020.} 65N30, 65N12.
\end{abstract}



\section{Introduction}

In this paper, we  address a long-standing problem in the analysis of non-conforming compatible methods, namely the construction of explicit, conforming liftings. Focusing on the Discrete de Rham complex of~\cite{Bonaldi.Di-Pietro.ea:24}, we build a piecewise polynomial lifting and showcase a first application: the rigorous proof of adjoint consistency estimates, laying another crucial brick in the construction of this complex.
Other foreseeable applications of this lifting include the proof of discrete functional analysis results for polytopal complexes such as Sobolev--Poincar\'e inequalities or discrete versions of the Rellich--Kondrachov and Maxwell compactness results.

Polytopal methods are numerical schemes for partial differential equations (PDEs) that support meshes made of generic polytopes.
They are inherently non-conforming, either because the discrete spaces they hinge on are not contained in their continuous counterpart, as in Discrete de Rham (DDR) \cite{Di-Pietro.Droniou.ea:20,Di-Pietro.Droniou:23*1}, Hybrid High-Order (HHO) \cite{Di-Pietro.Ern.ea:14,Di-Pietro.Ern:15,Di-Pietro.Droniou:20}, or Finite Volume (FV) methods \cite{Eymard.Gallouet.ea:00}, or because they require computable projections and stabilisation operators, as in Virtual Element Methods (VEM) \cite{Beirao-da-Veiga.Brezzi.ea:13,Brenner.Guan.ea:17,Beirao-da-Veiga.Mascotto.ea:22}. In either case, this lack of conformity results in the absence of exact integration by parts (IBP) formulas at the discrete level.
The residual up to which global IBP formulas are satisfied can be interpreted as an adjoint consistency error.
When the variational formulation of a continuous model is obtained from its strong formulation through IBPs, estimating adjoint consistency errors is critical to proving the consistency of the scheme.
Depending on the approach, such estimates are either explicitly stated as standalone results (see, e.g., \cite[Lemma 2.18]{Di-Pietro.Droniou:20} or \cite[Section 6.2]{Di-Pietro.Droniou:23*1}) or implicit used inside proofs of error estimates (as for standard Finite Element methods, where they are used to bound certain components of the error; see, e.g., the third term in \cite[Eq.~(2.20)]{Ern.Guermond:04} and also the error component defined by~\cite[Eq.~(27.3)]{Ern.Guermond:21}).
For methods such as HHO or FV, optimal estimates of the adjoint consistency error can be obtained by simply manipulating the formulas describing discrete operators.
For compatible methods such as DDR or VEM, on the other hand, this is a much more challenging task, which, so far, has required the construction and fine-grain study of conforming liftings obtained by solving local PDEs; see, e.g.,~\cite[Appendix~B]{Di-Pietro.Droniou:23*1}.
This has typically required subtle analysis arguments, made particularly challenging by the need to work on polytopal domains.

Our approach removes these limitations by developing computable conforming liftings on local polynomial spaces.
Such liftings are conceived with the DDR complex in mind and, for greater generality, we focus on its recent formulation based on differential forms~\cite{Bonaldi.Di-Pietro.ea:24}.
Inspired by similar developments in the context of Finite Elements~\cite{Arnold.Falk.ea:06,Arnold:18,Rapetti.Bossavit:09,Christiansen.Rapetti:16,Di-Pietro.Hanot:24}, this complex has made it possible to generalize results originally obtained for vector proxies \cite{Beirao-da-Veiga.Brezzi.ea:18,Di-Pietro.Droniou.ea:20} to domains of general dimensions~\cite{Di-Pietro.Droniou.ea:25} and also manifolds~\cite{Droniou.Hanot.ea:25}.

As already mentioned, a crucial innovative aspect of our work lies in the way we construct the conforming liftings. These liftings must satisfy two essential properties: a projection condition on cells of any dimension, ensuring compatibility with the discrete differential operators, and a boundedness condition in terms of the discrete data. We draw some inspiration from the curl-lifting technique of \cite[Section~6.5 and Appendix B]{Di-Pietro.Droniou:23*1} and by the minimal reconstruction operators for Mimetic Finite Differences of \cite[Chapter 3]{Beirao-da-Veiga.Lipnikov.ea:14}, both relying on the solution on top-dimensional cells of the continuous div-curl-type system
\[
\DIFF\lambda = \xi,\qquad
\delta\lambda = \zeta,\qquad
\tr_{\partial f}\,\lambda = \theta.
\]
where $\DIFF$ is the exterior derivative and $\delta = (-1)^k\star^{-1}\DIFF\star$ is the co-differential.
Here, we simplify this approach by omitting the co-differential constraint $\delta\lambda=\zeta$ and just select the minimal-norm solution to the resulting system, which ensures the linearity of the lifting. Crucially, when the data $\xi$ and $\theta$ are piecewise polynomial, the minimum norm solution $\lambda$ is itself piecewise polynomial, and is obtained by explicitly prescribing appropriate degrees of freedom.
This stands in stark contrast with previous approaches, and removes the limitations therein coming from the use of continuous PDEs (most notably, the fact that some fine regularity properties only hold in dimension $\le 3$).

The original DDR complex of~\cite{Di-Pietro.Droniou:23*1} has also be interpreted as a VEM complex \cite{Beirao-da-Veiga.Dassi.ea:22}, albeit different from the ones encountered in classical VEM literature. This presentation uncovers the corresponding virtual spaces, which are spanned by conforming functions obtained from the DDR degrees of freedom (DOFs).
Virtual functions, however, cannot be used as liftings in the proof of adjoint consistency, as no clear result bounding their norms in terms of the corresponding DOFs is available.
Such bounds on virtual functions are indeed not easy to establish, even in the simpler scalar case corresponding to $0$-forms \cite{Brenner.Guan.ea:17,Beirao-da-Veiga.Lovadina.ea:17}. The situation is even more dire for the vector case corresponding to $1$-forms, where fine estimates for div-curl PDEs on polytopal domains are required; see~\cite{Beirao-da-Veiga.Mascotto.ea:22}.
On the other hand, the VEM complex of \cite{Beirao-da-Veiga.Brezzi.ea:18} has been, in turn, recast in a unified language of differential forms in \cite{Bonaldi.Di-Pietro.ea:24}, and we believe that our approach to construct conforming liftings could also be adapted to this complex, and possibly others.

We conclude by pointing out two other possible future applications of the conforming lifting developed in the present paper: general Poincaré--Sobolev inequalities for all form degrees, and compactness results for compatible polytopal methods.
Such results will unlock rigorous analyses for nonlinear problems, including proofs of convergence to minimal regularity solutions that have been completely missing in the literature so far.

The rest of the paper is organized as follows.
In Section \ref{sec:presentation} we introduce notations and recall construction of the DDR complex in its exterior calculus presentation. The principal results are presented in Section \ref{sec:main.results}: existence and main properties of the conforming lifting (Theorem \ref{thm:conforming.lifting}), and adjoint consistency for the DDR -- in wedge and inner product formulations (Theorem \ref{thm:adjoint.consistency} and Corollary \ref{cor:adjoint.consistency.inner}). The proof of the consistency properties are gathered in Section \ref{sec:proof.consistency}. The majority of the article, Section \ref{sec:construction.lifting}, is devoted to the construction of the lifting, starting with preliminaries and local problem in cochain spaces (Section \ref{sec:cochains}), before analysing two local problems in trimmed polynomial spaces (Section \ref{sec:local.problems}), which are then used in a hierarchical construction of the lifting on the cells dimension (Section \ref{sec:def.lifting}), while the last section (Section \ref{sec:properties.lifting}) checks that the constructed lifting satisfies the required properties.


\section{The Discrete de Rham complex}\label{sec:presentation}

\subsection{Setting}\label{sec:presentation:mesh}

We give here a brief presentation of the setting for the polytopal exterior calculus Discrete de Rham (DDR) complex, and refer the reader to \cite{Bonaldi.Di-Pietro.ea:24} for more details.

\subsubsection{Polytopal mesh}\label{sec:mesh}

We consider a bounded polytopal domain $\Omega\subset\Real^\dtop$, discretised by a polytopal mesh $\Mh$. This mesh is a finite collection of (closed) polytopes of dimensions $d\in\{0,1,\ldots,\dtop\}$ called ($d$-)cells. The boundary of any $d$-cell is a union of $(d-1)$-cells and the intersection of two $d$-cells is either empty or a union of cells of lower dimension. The set of $d$-cells is denoted by $\FM{d}(\Mh)$, and we let $\FM{d}(f)$ be the set of $d$-cells contained in a given cell $f$ of dimension $\ge d$. We similarly write $\FM{d}(\partial f)$ for the set of $d$-cells contained in $\partial f$.
Each $f\in\Mh$ of dimension $\ge 1$ is equipped with an orientation, and we denote by $h_f$ its diameter.

A $d$-simplex is the image by an affine map of the reference simplex $\{(x_0,\ldots,x_d)\in\Real^d\,:\,x_i\ge 0\,,\;\sum_{i=0}^dx_i=1\}$. The concept of (conforming) simplicial mesh is understood in the usual sense. We will assume that $\Mh$ is regular in a sense that generalises \cite[Definition~1.9]{Di-Pietro.Droniou:20}. In particular, this means that $\Mh$ has a simplicial sub-mesh $\Sh$ -- that is, each $f\in\FM{d}(\Mh)$ is the union of a set $\Sh(f)$ of $d$-simplices in $\Sh$ -- and that there exists $\rho\in (0,1)$ independent of $h$ such that: (i) for any simplex $F\in\Sh$ with inradius $r_F$ and diameter $h_F$, we have $\rho h_F\le r_F$, and (ii) for any $f\in\Mh$ of dimension $\ge 1$ and any $F\in\Sh(f)$, $\rho h_f\le h_F$.
For simplicity, if $f$ is a $d$-cell we denote by $\Sh(\partial f)=\cup_{f'\in\FM{d-1}(f)}\Sh(f')$ the set of simplices of $\Sh$ contained in $\partial f$.
Notice that $\Sh(f)$ and $\Sh(\pf)$ form simplicial meshes on $f$ and on $\pf$, respectively.

From here on, we will write $a\lesssim b$ as a shorthand for $a\le Cb$ with $C$ depending on $\Omega$ and $\rho$, but not depending on $h$ nor on a specific cell in $\Mh$ or simplex in $\Sh$.
The mesh regularity assumptions above imply $\card(\Sh(f))\lesssim 1$ for all $f\in\Mh$.

\subsubsection{Differential forms}

If $f$ is a $d$-cell and $k \in \{0, \dots, d\}$ is a form degree, $\Lambda^k(f)$ denotes the set of differential $k$-forms on $f$, that
is, mappings $\omega:f\to \Alt^k(f)$ where $\Alt^k(f)$ is the set of alternating $k$-multilinear forms on the tangent space of $f$
(vector space of the affine span of $f$).
Any specific regularity on these forms (i.e., on the coefficients of their expansion in a given basis of $\Alt^k(f)$) is denoted by prepending $\Lambda^k(f)$; for example, $C^0\Lambda^k(f)$ and $H^1\Lambda^k(f)$ respectively denote the spaces of forms with coefficients that are continuous or in $H^1(f)$.

We additionally introduce the space $H\Lambda^k(f)$ of square-integrable forms whose exterior derivative is also square-integrable:
\[
H\Lambda^k(f)\coloneq\{\alpha\in L^2\Lambda^k(f)\,:\,\DIFF\alpha\in L^2\Lambda^k(f)\}.
\]
Note that there is no ambiguity with, e.g., $H^1\Lambda^k(f)$ since $H$ above does not carry any exponent.

The Hodge-star operator on a $d$-cell $f$ is denoted by $\star_f$; it is an isomorphism $\Lambda^k(f)\to \Lambda^{d-k}(f)$ for all
$d \in\{0,\ldots,k\}$. As the domain can be inferred from the context, we will drop the index $f$ and only write $\star$.

For each $f\in\Mh$, we fix a point ${x}_f\in f$ such that $f$ contains a ball centered at $x_f$ and of radius $\gtrsim h_f$. 
The Koszul operator $\KOSZUL_f$ on $f$ is then defined as the interior product against the identity vector field shifted at $x_f$:
if $\omega \in \Lambda^k(f)$, then $\KOSZUL_f\omega \in \Lambda^{k-1}(f)$ is defined by 
\[
(\KOSZUL_f\omega)_{{x}}({v}_1, \dots, {v}_{k-1}) 
= \omega_{{x}}({x}-{x}_f,{v}_1, \dots, {v}_{k-1}),
\]
for all ${x}\in f$ and all ${v}_1, \dots, {v}_{k-1}$ tangent vectors to $f$.

The inner product on $L^2\Lambda^k(f)$ is given by
\begin{equation}\label{eq:def.L2.forms}
\langle \omega,\mu\rangle_f\coloneq \int_f \omega\wedge\star\mu
\end{equation}
and the corresponding norm is denoted by $\norm{f}{\omega} \coloneq \langle \omega, \omega\rangle_f^{\frac12}$.

\subsubsection{Local polynomial forms}

If $r\ge 0$ is an integer, $\PL{r}{k}(f)$ is the space of $k$-forms on $f$ whose coefficient on a basis of $\Alt^k(f)$ are polynomials
of (total) degree $\le r$. By convention, we set $\PL{r}{k}(f) \coloneq \left\{ 0 \right\}$ when $r<0$.
The following decompositions of polynomial space hold (see, e.g., \cite[Eq.~(3.11)]{Arnold.Falk.ea:06} for $k\ge 1$):
\begin{equation}\label{eq:decomp.PL}
  \begin{aligned}
    \PL{r}{0}(f) &= \PL{0}{0}(f) \oplus \KOSZUL_f \PL{r-1}{1}(f),\\
    \PL{r}{k}(f) &= \DIFF \PL{r+1}{k-1}(f) \oplus \KOSZUL_f \PL{r-1}{k+1}(f)\qquad\text{if $k\ge 1$}.
  \end{aligned}
\end{equation}
The spaces of \emph{trimmed} polynomials are obtained by lowering the degree of the first term in this decomposition for $k\ge 1$:
\begin{equation}    \label{eq:def.Ptrim}
  \begin{aligned}
    \PLtrim{r}{0}(f) &\coloneq \PL{r}{0}(f), \\
    \PLtrim{r}{k}(f) &\coloneq \DIFF \PL{r}{k-1}(f) \oplus \KOSZUL_f \PL{r-1}{k+1}(f) \qquad\text{ if } k\ge 1.
  \end{aligned}
\end{equation}
The $L^2$-orthogonal projector $\ltproj{r}{k}{f} \st L^2\Lambda^k(f) \to \PLtrim{r}{k}(f)$ is such that, for $\omega \in L^2\Lambda^k(f)$,
\[
\langle \ltproj{r}{k}{f} \omega, \mu \rangle_f = \langle \omega, \mu \rangle_f \qquad \forall \mu \in \PLtrim{r}{k}(f).
\]

\subsection{The Discrete de Rham construction}\label{sec:ddr}

Let us fix an integer $r \ge 0$ corresponding to the polynomial degree of the discrete de Rham complex.

\subsubsection{Spaces, interpolators and discrete operators}

For any $k \in \{0, \ldots, n\}$, the discrete version $\uH{k}{h}$ of $H\Lambda^k(\Omega)$ is
\[
\uH{k}{h} \coloneq \bigtimes_{d = k}^{n} \bigtimes_{f \in \FM{d}(\Mh)} \star^{-1}\PLtrim{r}{d-k}(f).
\]

\begin{remark}[Choice of component spaces]\label{rem:choice.component.space}
  Notice that, as in \cite{Di-Pietro.Droniou.ea:25}, we have taken $\star^{-1}\PLtrim{r}{d-k}(f)$ instead of $\PLtrim{r}{d-k}(f)$ as component space. This choice allows for an easier generalisation to manifolds \cite{Droniou.Hanot.ea:25}.
  In the context of flat spaces as considered here the results of \cite{Bonaldi.Di-Pietro.ea:24} can still be invoked with minimal adaptations. Moreover, by \eqref{eq:def.Ptrim}, we have
  \begin{equation}\label{eq:star.P.P}
    \star^{-1}\PLtrim{r}{d-k}(f) =\left\{\begin{array}{ll}
    \PL{r}{k}(f)&\text{ if $d=k$},\\
    \PLtrim{r}{k}(f)&\text{ if $d>k$}.
    \end{array}\right.
  \end{equation}
\end{remark}

The component of $\ul{\omega}_h\in \uH{k}{h}$ on a $d$-cell $f\in\Mh$ with $d\ge k$ is denoted by $\omega_f$. The vector $\ul{\omega}_f=(\omega_{f'})_{f'\in\FM{d'}(f),\,d'\in[k,d]}$ is the restriction of $\ul{\omega}_h$ to $f$, which gathers all the components on $f$ and the cells in its boundary. $\uH{k}{f}$ is the space of such restrictions.
The local interpolator $\uI{k}{f} : C^0\Lambda^k(f) \to \uH{k}{f}$ is defined through the $L^2$-projections on trimmed polynomial spaces by
\begin{equation}\label{eq:local.interpolator}
  \uI{k}{f} \omega \coloneq \left(
  \star^{-1}\ltproj{r}{d'-k}{f'}\star \tr_{f'} \omega
  \right)_{f' \in \FM{d'}(f),\, d' \in \{k,\ldots,d\}}
  \qquad \forall \omega \in C^0\Lambda^k(f).
\end{equation}
The global interpolator $\uI{k}{h}:C^0\Lambda^k(\overline{\Omega})\to\uH{k}{h}$ is obtained by gathering all the components of the
local interpolators, for all $f\in\Mh$.

For all $d \ge k$ and all $f \in \FM{d}(\Mh)$, we now define a discrete potential $P^k_{r,f} \st \uH{k}{f} \rightarrow \PL{r}{k}(f)$
and, if $d \ge k+1$, a discrete exterior derivative $\DIFF^k_{r,f} \st \uH{k}{f} \rightarrow \PL{r}{k+1}(f)$.
In what follows, for sake of brevity we set
\begin{equation*}
  \int_{\pf} \bullet \coloneq \sum_{f'\in\FM{d-1}(\partial f)}\orffp\int_{f'}\bullet,
\end{equation*}
where $\orffp\in\left\{ -1,+1 \right\}$ is the orientation of $f'$ relative to $f$. The discrete potential and discrete exterior derivative are then defined recursively as follows: for $\ul{\omega}_h\in\uH{k}{h}$,
\begin{itemize}
\item If $d = k$, 
  \begin{equation} \label{eq:def.P.d=k}
    P^k_{r,f} \ul{\omega}_f
    \coloneq \omega_f \in \star^{-1}\PL{r}{0}(f)
    \overset{\eqref{eq:star.P.P}}=\PL{r}{k}(f).
  \end{equation}
\item For $d \in \{ k+1,\ldots,n\}$:
  \begin{enumerate}
  \item We first define the discrete exterior derivative such that
    \begin{multline} \label{eq:def.d}
      \int_f \DIFF^k_{r,f} \ul{\omega}_f \wedge \mu
      = (-1)^{k+1} \int_f \omega_f \wedge \DIFF \mu
      + \int_{\pf} P^k_{r,\pf} \ul{\omega}_f \wedge \tr_\pf \mu
      \\
      \forall \mu \in \PL{r}{d-k-1}(f),
    \end{multline}
    where $P^k_{r,\pf}\ul{\omega}_f$ is the piecewise polynomial form on $\pf$ obtained patching together the already constructed polynomials $(P^k_{r,f'}\ul{\omega}_{f'})_{f'\in\FM{d-1}(\partial f)}$. 
  \item We then define the discrete potential:
    \begin{multline} \label{eq:def.pot}
      (-1)^{k+1} \int_f P^k_{r,f} \ul{\omega}_f\wedge (\DIFF \mu + \nu )
      \\
      = \int_f \DIFF^k_{r,f} \ul{\omega}_f \wedge  \mu 
      - \int_{\pf} P^k_{r,\pf} \ul{\omega}_\pf \wedge \tr_\pf \mu
      + (-1)^{k+1} \int_f  \omega_f \wedge \nu 
      \\
      \forall (\mu, \nu) \in \KOSZUL_f \PL{r}{d-k}(f) \times \KOSZUL_f \PL{r-1}{d-k+1}(f).
    \end{multline}
  \end{enumerate}
\end{itemize}

\begin{remark}[Validity of \eqref{eq:def.pot}]
  The definition \eqref{eq:def.d} of $\DIFF^k_{r,f}$ and the relation $\DIFF^2=0$ show that both sides of \eqref{eq:def.pot} vanishes when $\nu=0$ and $\mu\in\DIFF \PL{r+1}{d-k-2}(f)+\PL{0}{d-k-1}(f)$. Using \eqref{eq:def.Ptrim} (together with \eqref{eq:decomp.PL} if $d-k-1=0$) with $r\leftarrow r+1$ and combining with \eqref{eq:def.pot}, we infer
  \begin{equation}\label{eq:ipp.pot}
    (-1)^{k+1}\int_f P^k_{r,f}\ul{\omega}_f\wedge \DIFF \mu
    = \int_f \DIFF^k_{r,f}\ul{\omega}_f \wedge \mu
    - \int_{\pf} P^k_{r,\pf} \ul{\omega}_{\pf} \wedge \tr_{\pf} \mu\qquad
    \forall \mu\in \PLtrim{r+1}{d-k-1}(f).
  \end{equation}
\end{remark}

The global discrete exterior derivative $\ul{\DIFF}^k_{r,h}:\uH{k}{h}\to \uH{k+1}{h}$ is obtained by projecting each discrete exterior derivative on the corresponding component of $\uH{k+1}{h}$:
\begin{equation*} 
  \ul{\DIFF}^k_{r,h} \ul{\omega}_h \coloneq \left(
  \star^{-1}\ltproj{r}{d-k-1}{f}\star \DIFF^k_{r,f} \ul{\omega}_f
  \right)_{f \in \FM{d}(\Mh), d \in \{k+1, \dots, \dtop\}}.
\end{equation*}
The DDR complex then reads
\begin{equation*}
  \begin{tikzcd}
    0 \arrow{r}{}
    &\uH{0}{h} \arrow{r}{\ul{\DIFF}^0_{r,h}}
    &\uH{1}{h} \arrow{r}{\ul{\DIFF}^1_{r,h}}
    &\cdots \arrow{r}{}
    &\uH{n-1}{h} \arrow{r}{\ul{\DIFF}^{n-1}_{r,h}}
    &\uH{n}{h} \arrow{r}{}
    &0.
  \end{tikzcd}
\end{equation*}

\subsubsection{Norms and discrete inner products}

For any form degree $k \in \{0, \dots, \dtop\}$ and any $\ul{\omega}_h \in \uH{k}{h}$, we define a component $L^2$-like norm recursively:
\begin{subequations}\label{eq:opn.f}
  \begin{itemize}
  \item For each $f \in \FM{k}(\Mh)$, recalling that $\norm{f}{{\cdot}}$ is the norm on $L^2\Lambda^k(f)$, we set
    \begin{equation}\label{eq:opn.f.k}
      \opn{f}{\ul{\omega}_f} \coloneq \norm{f}{\omega_f}.
    \end{equation}
  \item For $d \in \{ k+1,\dots,\dtop\}$ and $f\in\FM{d}(\Mh)$,
    \begin{equation}\label{eq:opn.f.k+}
      \opn{f}{\ul{\omega}_f} \coloneq \left(
      \norm{f}{\omega_f}^2 +
      h_f \sum_{f'\in\FM{d-1}(f)} \opn{f'}{\omega_{f'}}^2
      \right)^{\frac12}.
    \end{equation}
  \end{itemize}
\end{subequations}
The global norm is then defined as
\begin{equation*} 
  \opn{h}{\ul{\omega}_h} \coloneq \left(
  \sum_{f\in\FM{\dtop}(\Mh)}\opn{f}{\ul{\omega}_h}^2
  \right)^{\frac12}.
\end{equation*}

The discrete potential enables us to also define an $L^2$-like inner product on local spaces $\uH{k}{f}$, for $f\in\FM{d}(\Mh)$ with $d\ge k$:
\begin{equation}\label{eq:def.local.inner.product}
  (\ul{\omega}_f,\ul{\mu}_f)_{k,f}\coloneq
  \langle P^k_{r,f}\ul{\omega}_f, P^k_{r,f}\ul{\mu}_f\rangle_f
  + \mathrm{s}_{k,f}(\ul{\omega}_f,\ul{\mu}_f)\qquad
  \forall \ul{\omega}_f,\ul{\mu}_f\in\uH{k}{f},
\end{equation}
where $\mathrm{s}_{k,f}$ is a positive semi-definite bilinear form on $\uH{f}{k}$ that penalises the difference between the discrete potential on $f$ and those on $f'\in\cup_{d'\in\{k,\ldots,d-1\}}\FM{d'}(f)$, and vanishes on interpolants of polynomial forms of degree $\le r$ on $f$; see \cite[Section 3.1.5]{Bonaldi.Di-Pietro.ea:24} for further details. The stabilisation is in particular selected to ensure that the norm associated with this inner product is equivalent to the component norm:
\begin{equation}\label{eq:norm.equivalence}
  \opn{f}{{\cdot}}\simeq (\cdot,\cdot)_{k,f}^{1/2}.
\end{equation}

\section{Main results}\label{sec:main.results}

\subsection{Conforming lifting}

\begin{theorem}[Conforming lifting]\label{thm:conforming.lifting}
  Let $\Mh$ be a polytopal mesh, $r\ge 0$ be a polynomial degree, and $k\in\{0,\ldots,\dtop\}$ be a form degree.
  Then, there exists a linear lifting $\lift{k}{h}:\uH{k}{h}\to H\Lambda^{k}(\Omega)$ such that, for all $\ul{\omega}_h\in\uH{k}{h}$,
  all $d \in \{k, \dots, \dtop\}$ and all $f\in\FM{d}(\Mh)$,
  \begin{align}
    \label{eq:lift.proj}
    \int_f \tr_f(\lift{k}{h} \ul{\omega}_h) \wedge \zeta
    ={}&\int_f P^{k}_{r,f} \ul{\omega}_f \wedge \zeta \qquad \forall \zeta \in \PLtrim{r+1}{d-k}(f),\\
    \label{eq:dlift.proj}
    \int_f \DIFF\tr_f(\lift{k}{h} \ul{\omega}_h) \wedge \mu
    ={}&
      \int_f \DIFF^{k}_{r,f} \ul{\omega}_f \wedge \mu \qquad \forall \mu \in \PLtrim{r+1}{d-k-1}(f)\quad\text{(for $d\ge k+1$)},
    \\
    \label{eq:lift.bound.0}
    \norm{f}{\tr_f(\lift{k}{h} \ul{\omega}_h)} \lesssim{}& \opn{f}{\ul{\omega}_f},\\
    \label{eq:lift.bound.1}
    \norm{f}{\DIFF \tr_f(\lift{k}{h} \ul{\omega}_h)}
    \lesssim{}& \opn{f}{\ul{\DIFF}^{k}_{r,f}\ul{\omega}_f}\qquad\text{ if $d\ge k+1$}.
  \end{align}
\end{theorem}

\begin{proof} The lifting is defined in Section \ref{sec:def.lifting} and its properties are established in Section \ref{sec:properties.lifting}.
\end{proof}

\begin{remark}[Piecewise polynomial lifting]\label{rem:piecewise.lifting}
  The construction of the lifting shows that, for all $\ul{\omega}_h\in\uH{k}{h}$ and all $f\in\FM{d}(\Mh)$ with $d\ge k$,
  the trace $\tr_f(\lift{k}{h}\ul{\omega}_h)$ is well-defined and single-valued, and belongs to $\PLtrim{r+1+d-k}{k}(F)$ for all $F\in\FM{d}(\Sh(f))$.
  In particular, the lifting is piecewise polynomial on $\Sh$.
\end{remark}

\begin{remark}[The lifting is a right-inverse of the interpolator]
  The definition \eqref{eq:def.L2.forms} of the $L^2$-inner product on differential forms, the relation $\rho\wedge\nu=\star\rho\wedge\star\nu$, and the fact that $\star P^k_{r,f}\ul{\omega}_f\in\PL{r}{d-k}(f)\subset \PLtrim{r+1}{d-k}(f)$ and $\star \DIFF^k_{r,f}\ul{\omega}_f\in\PL{r}{d-k-1}(f)\subset \PLtrim{r+1}{d-k-1}(f)$ show that \eqref{eq:lift.proj} and \eqref{eq:dlift.proj} are equivalent to
  \begin{subequations}\label{eq:proj.L.P.d}
  \begin{align}\label{eq:projection.lifting.pot}
    \ltproj{r+1}{d-k}{f}(\star\tr_f\lift{k}{h}\ul{\omega}_h)
    &= \star P^k_{r,f}\ul{\omega}_f,\\
    \label{eq:projection.diff.lifting.pot}
      \ltproj{r+1}{d-k-1}{f}(\star\DIFF\tr_f\lift{k}{h}\ul{\omega}_h)
    &=
      \star \DIFF^k_{r,f}\ul{\omega}_f\quad\text{(for $d\ge k+1$)}.
  \end{align}
  \end{subequations}
  By \cite[Eq.~(3.37)]{Bonaldi.Di-Pietro.ea:24} and the choice of space commented in Remark \ref{rem:choice.component.space}, we have $\ltproj{r}{d-k}{f}(\star P^k_{r,f}\ul{\omega}_f)=\star\omega_f$. Applying $\ltproj{r}{d-k}{f}$ to both sides of \eqref{eq:projection.lifting.pot}, noticing that $\ltproj{r}{d-k}{f}\circ \ltproj{r+1}{d-k}{f}=\ltproj{r}{d-k}{f}$, and recalling the definition \eqref{eq:local.interpolator} of the local interpolator, we see that for all $\ul{\omega}_h\in\uH{k}{h}$, the interpolator can be applied to $\lift{k}{h}\ul{\omega}_h$ (which is not in $C^0 \Lambda^k(\overline{\Omega})$ but has single-valued integrable traces on lower-dimensional cells, see Remark \ref{rem:piecewise.lifting}), and that
  \[
  \uI{k}{h}\lift{k}{h}\ul{\omega}_h=\ul{\omega}_h.
  \]
\end{remark}

\begin{remark}[Homogeneous boundary conditions]\label{rem:homogeneous.BC}
  The estimate \eqref{eq:lift.bound.0} shows that the lifting preserves local zero traces, in the sense that
  if $\ul{\omega}_f=0$ for some $f\in\FM{\dtop-1}(\Mh)$, then $\tr_f(\lift{k}{h}\ul{\omega}_h)=0$. In particular,
  if $\ul{\omega}_h$ vanishes on $\partial\Omega$ (in the sense that all its components on cells included in $\partial\Omega$
  vanish), then $\tr_{\partial\Omega}\lift{k}{h}\ul{\omega}_h=0$; in other words, the lifting preserves homogeneous boundary conditions.
\end{remark}

\subsection{Consistency properties}\label{sec:adjoint.consistency}

We first recall some primal consistency properties. For this, we introduce the notation, for $\omega\in H^{\max(r+1,s)}\Lambda^k(f)$,
\[
\seminorm{H^{(r+1,s)}\Lambda^k(f)}{\omega}\coloneq
\left\{\begin{array}{cc}
\displaystyle\seminorm{H^{r+1}\Lambda^k(f)}{\omega}&\mbox{ if $s\le r+1$},\\[.3em]
\displaystyle\sum_{t=r+1}^s h_f^{t-r-1}\seminorm{H^{t}\Lambda^k(f)}{\omega}&\mbox{ if $s> r+1$}.
\end{array}\right.
\]

\begin{proposition}[Primal consistency]
  Let $k\in\{0,\ldots,\dtop\}$, $d\ge k$ and $f\in\Delta_d(\Mh)$. Then, for any integers $s\ge 0$ such that $2s>d$ and $0\le m\le r+1$, it holds:
  \begin{enumerate}
  \item Consistency of the discrete potential:
    \begin{equation}\label{eq:discrete.potential:polynomial.consistency.smooth}
      \seminorm{H^m\Lambda^k(f)}{P^k_{r,f}\uI{k}{f}\omega - \omega}\lesssim h_f^{r+1-m}\seminorm{H^{(r+1,s)}\Lambda^k(f)}{\omega}
      \qquad\forall\omega\in H^{\max(r+1,s)}\Lambda^k(f).
    \end{equation}
  \item Consistency of the discrete exterior derivative: if $d\ge k+1$,
    \begin{multline}\label{eq:discrete.exterior.derivative:polycons.smooth}
      \seminorm{H^{m}\Lambda^{k+1}(f)}{\DIFF_{r,f}^k\uI{k}{f}\omega - \DIFF\omega}\lesssim h_f^{r+1-m}\seminorm{H^{(r+1,s)}\Lambda^{k+1}(f)}{\DIFF\omega}
      \\
      \forall\omega\in C^1\Lambda^k(f)\mbox{ s.t. }\DIFF\omega\in H^{\max(r+1,s)}\Lambda^{k+1}(f).
    \end{multline}
  \item Consistency of the inner product:
    \begin{multline}\label{eq:inner.product.consistency}
      \left|(\uI{k}{h}\omega,\ul{\mu}_f)_{k,f}-\int_f \omega\wedge\star P^{k}_{r,f}\ul{\mu}_f\right|\lesssim
      h_f^{r+1}\seminorm{H^{(r+1,s)}\Lambda^k(f)}{\omega}\opn{f}{\ul{\mu}_f}\\
      \forall \omega\in H^{\max(r+1,s)}\Lambda^k(f)\,,\quad\forall \ul{\mu}_f\in\uH{k}{f}.
    \end{multline}
  \end{enumerate}
\end{proposition}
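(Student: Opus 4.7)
The plan is to derive all three estimates from the now-standard three-step pattern used throughout the DDR literature: polynomial preservation, boundedness in a Sobolev seminorm, and Bramble--Hilbert approximation, propagated by induction on the cell dimension $d$. For item (i), I first recall (or reprove, by induction on $d$ from the recursive definitions \eqref{eq:def.P.d=k}--\eqref{eq:def.pot}) that $P^k_{r,f}\circ\uI{k}{f}$ is a projector onto $\PL{r}{k}(f)$, the key ingredient being that each $\ltproj{r}{d'-k}{f'}$ preserves the relevant trimmed polynomial traces. Second, I establish the uniform boundedness
\[
\seminorm{H^m\Lambda^k(f)}{P^k_{r,f}\uI{k}{f}\omega}\lesssim \seminorm{H^{(r+1,s)}\Lambda^k(f)}{\omega}+\seminorm{H^m\Lambda^k(f)}{\omega},
\]
again inductively in $d$, using continuous trace inequalities on the polytopal cell $f$ (the regime $s>r+1$ produces the weighted sum in $\seminorm{H^{(r+1,s)}}{\cdot}$) together with inverse inequalities on $\Sh(f)$, which are licit thanks to mesh regularity. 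The conclusion follows from the decomposition
\[
\omega-P^k_{r,f}\uI{k}{f}\omega = (\omega-\pi\omega)-P^k_{r,f}\uI{k}{f}(\omega-\pi\omega),
\]
valid for any $\pi\omega\in\PL{r}{k}(f)$, by choosing $\pi\omega$ via the classical Bramble--Hilbert lemma and combining the boundedness bound with the polynomial approximation estimate.

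Item (ii) follows the same template applied one notch up. Polynomial preservation now reads $\DIFF^k_{r,f}\uI{k}{f}\omega=\DIFF\omega$ whenever $\DIFF\omega\in\PL{r}{k+1}(f)$: this is obtained by testing \eqref{eq:def.d} against $\mu\in\PL{r}{d-k-1}(f)$, integrating by parts in the continuous setting on $\omega$, and using the polynomial preservation of $P^k_{r,\pf}\uI{k}{\pf}$ on boundary cells already handled in (i). A boundedness estimate for $\DIFF^k_{r,f}\uI{k}{f}$ in $H^m\Lambda^{k+1}(f)$ is then read off from \eqref{eq:def.d} via trace and inverse inequalities on $\Sh(f)$. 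A Bramble--Hilbert approximation of $\DIFF\omega$ at degree $r$ concludes.

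For item (iii), I expand the inner product via \eqref{eq:def.local.inner.product} and use the identity $\int_f \omega\wedge\star\eta=\langle\omega,\eta\rangle_f$ (\cite[Lemma 1]{Bonaldi.Di-Pietro.ea:24}) to split
\[
(\uI{k}{f}\omega,\ul{\mu}_f)_{k,f}-\int_f\omega\wedge\star P^k_{r,f}\ul{\mu}_f =\langle P^k_{r,f}\uI{k}{f}\omega-\omega,P^k_{r,f}\ul{\mu}_f\rangle_f + \mathrm{s}_{k,f}(\uI{k}{f}\omega,\ul{\mu}_f).
\]
The first summand is controlled by Cauchy--Schwarz, item (i) at $m=0$, and the bound $\norm{f}{P^k_{r,f}\ul{\mu}_f}\lesssim\opn{f}{\ul{\mu}_f}$ (from \eqref{eq:def.local.inner.product}--\eqref{eq:norm.equivalence}). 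The second is reduced, by the vanishing of $\mathrm{s}_{k,f}$ on polynomial interpolants recalled after \eqref{eq:def.local.inner.product}, to $\mathrm{s}_{k,f}(\uI{k}{f}(\omega-\pi\omega),\ul{\mu}_f)$ for any $\pi\omega\in\PL{r}{k}(f)$, then bounded by Cauchy--Schwarz with respect to $\opn{f}{\cdot}$ (using \eqref{eq:norm.equivalence}) and a final Bramble--Hilbert estimate on $\opn{f}{\uI{k}{f}(\omega-\pi\omega)}$.

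The main obstacle I anticipate is the Sobolev boundedness of $P^k_{r,f}\uI{k}{f}$ underlying (i) and (ii): because the interpolator sees traces of $\omega$ on cells of \emph{every} dimension $k\le d'\le d$, one must apply continuous trace inequalities from the polytope $f$ to all sub-cells at once. This is exactly what forces the weighted multi-scale seminorm $\seminorm{H^{(r+1,s)}\Lambda^k(f)}{\cdot}$, with the assumption $2s>d$ guaranteeing $H^s\hookrightarrow C^0$ on $f$ so that $\uI{k}{f}\omega$ is well defined to begin with. Once this multi-dimensional trace bookkeeping is set up carefully, every remaining step reduces to textbook inverse/Bramble--Hilbert arguments.
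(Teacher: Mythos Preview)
Your proposal is correct and follows the same approach as the paper. The paper simply cites items (i) and (ii) from \cite[Corollary~17]{Bonaldi.Di-Pietro.ea:24}, whose proof proceeds exactly along the lines you sketch (polynomial preservation plus boundedness plus Bramble--Hilbert, by induction on $d$); for item (iii) your argument coincides with the paper's, which combines (i) at $m=0$, the consistency of the stabilisation \cite[Lemma~19]{Bonaldi.Di-Pietro.ea:24}, and the norm equivalence \eqref{eq:norm.equivalence}.
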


\begin{proof}
  The estimates \eqref{eq:discrete.potential:polynomial.consistency.smooth} and \eqref{eq:discrete.exterior.derivative:polycons.smooth}
  are established in \cite[Corollary 17]{Bonaldi.Di-Pietro.ea:24}. The estimate \eqref{eq:inner.product.consistency} is an immediate
  consequence of \eqref{eq:discrete.potential:polynomial.consistency.smooth} with $m=0$, of the consistency of the stabilisation bilinear
  form $\mathrm{s}_{k,f}$ stated in \cite[Lemma 19]{Bonaldi.Di-Pietro.ea:24}, and of the norm equivalence \eqref{eq:norm.equivalence}.
\end{proof}

To state the adjoint consistency property, we introduce the global piecewise potential $P^k_{r,h}:\uH{k}{h}\to L^2\Lambda^k(\Omega)$ obtained by patching the discrete potentials on top-dimensional cells: for all $\ul{\omega}_h \in \uH{k}{h}$, $(P^k_{r,h}\ul{\omega}_h)|_f \coloneqq P^k_{r,f}\ul{\omega}_f$ for all $f\in\FM{\dtop}(\Mh)$. Likewise, the global piecewise exterior derivative $\DIFF^k_{r,h}:\uH{k}{h}\to L^2\Lambda^{k+1}(\Omega)$ is obtained patching the discrete exterior derivatives $\DIFF^k_{r,f}$, for $f\in\FM{\dtop}(\Mh)$. Finally, $H^s\Lambda^k(\Mh)$ denotes the space of piecewise $H^s$-forms on the top-dimensional $\dtop$-cells, equipped with the norm
\begin{equation}\label{eq:def.Hr.global}
  \seminorm{H^s\Lambda^k(\Mh)}{\alpha}\coloneq\left(\sum_{f\in\FM{\dtop}(\Mh)}\seminorm{H^s\Lambda^k(f)}{\alpha}^2\right)^{1/2}.
\end{equation}

\begin{theorem}[Adjoint consistency]\label{thm:adjoint.consistency}
  Let $k\in\{0,\ldots,\dtop-1\}$, and $\alpha\in C^0\Lambda^k(\overline{\Omega})\cap H^{r+2}\Lambda^k(\Mh)$ be such that $\tr_{\partial\Omega}\alpha=0$. Then, for all $\ul{\omega}_h\in\uH{n-k-1}{h}$,
  \begin{multline}\label{eq:adjoint.consistency}
    \left|\int_\Omega \alpha\wedge \DIFF^{n-k-1}_{r,h}\ul{\omega}_h +(-1)^k \int_\Omega \DIFF\alpha\wedge P^{n-k-1}_{r,h}\ul{\omega}_h\right|
    \\
    \lesssim h^{r+1}\left(\seminorm{H^{r+1}\Lambda^k(\Mh)}{\alpha}\opn{h}{\ul{\DIFF}^{n-k-1}_{r,h}\ul{\omega}_h}
    +\seminorm{H^{r+1}\Lambda^{k}(\Mh)}{\DIFF\alpha}\opn{h}{\ul{\omega}_h}\right).
  \end{multline}
\end{theorem}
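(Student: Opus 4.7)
The plan is to use the conforming lifting of Theorem~\ref{thm:conforming.lifting} to transform the purely discrete identity into an application of continuous integration by parts. Set $\xi_h\coloneq\lift{n-k-1}{h}\ul{\omega}_h\in H\Lambda^{n-k-1}(\Omega)$. Since $\tr_{\partial\Omega}\alpha=0$, continuous integration by parts on $\Omega$ gives $\int_\Omega \alpha\wedge\DIFF\xi_h = (-1)^{k+1}\int_\Omega\DIFF\alpha\wedge\xi_h$. Subtracting this null identity from the quantity to be bounded, I decompose the residual as $T=T_1-(-1)^{k+1}T_2$, where
\begin{equation*}
T_1\coloneq\sum_{f\in\FM{\dtop}(\Mh)}\int_f\alpha\wedge(\DIFF^{n-k-1}_{r,f}\ul{\omega}_f - \DIFF\xi_h),\quad
T_2\coloneq\sum_{f\in\FM{\dtop}(\Mh)}\int_f\DIFF\alpha\wedge(P^{n-k-1}_{r,f}\ul{\omega}_f - \xi_h).
\end{equation*}

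The term $T_2$ is handled directly from~\eqref{eq:lift.proj}. On each top cell $f$, that identity applied with form degree $n-k-1$ (so $d-(n-k-1)=k+1$) yields $L^2$-orthogonality of $P^{n-k-1}_{r,f}\ul{\omega}_f-\tr_f\xi_h$ to $\PLtrim{r+1}{k+1}(f)$. I would subtract $\ltproj{r+1}{k+1}{f}\DIFF\alpha$ from $\DIFF\alpha$ and apply Cauchy--Schwarz: the approximation property of the projector contributes $h_f^{r+1}\seminorm{H^{r+1}\Lambda^{k+1}(f)}{\DIFF\alpha}\le h_f^{r+1}\seminorm{H^{r+2}\Lambda^k(f)}{\alpha}$, while~\eqref{eq:lift.bound.0} together with~\eqref{eq:norm.equivalence} and the triangle inequality gives $\norm{f}{P^{n-k-1}_{r,f}\ul{\omega}_f-\tr_f\xi_h}\lesssim\opn{f}{\ul{\omega}_f}$. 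Summation over top cells yields the second contribution to the right-hand side of~\eqref{eq:adjoint.consistency}.

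The term $T_1$ is the main step. Combining~\eqref{eq:ipp.pot} (with form degree $n-k-1$ and $d=\dtop$) with the continuous Green identity applied to $\xi_h$ on $f$---valid because $\tr_\pf\xi_h$ is single-valued by Remark~\ref{rem:piecewise.lifting}---one obtains, for any $\mu\in\PLtrim{r+1}{k}(f)$,
\begin{equation*}
\int_f(\DIFF^{n-k-1}_{r,f}\ul{\omega}_f - \DIFF\xi_h)\wedge\mu = (-1)^{n-k}\int_f(P^{n-k-1}_{r,f}\ul{\omega}_f - \xi_h)\wedge\DIFF\mu
+ \int_\pf(P^{n-k-1}_{r,\pf}\ul{\omega}_\pf - \tr_\pf\xi_h)\wedge\tr_\pf\mu.
\end{equation*}
The projection property~\eqref{eq:lift.proj} annihilates both right-hand sides: on $f$ because $\DIFF\mu\in\PL{r}{k+1}(f)\subset\PLtrim{r+1}{k+1}(f)$, and on every $f'\in\FM{\dtop-1}(\pf)$ because $\tr_{f'}\mu\in\PLtrim{r+1}{k}(f')$. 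Hence $\DIFF^{n-k-1}_{r,f}\ul{\omega}_f-\DIFF\xi_h$ is $L^2$-orthogonal to $\PLtrim{r+1}{k}(f)$. As a byproduct, taking $\mu=\star\DIFF^{n-k-1}_{r,f}\ul{\omega}_f\in\PL{r}{k}(f)\subset\PLtrim{r+1}{k}(f)$ yields the a priori control $\norm{f}{\DIFF^{n-k-1}_{r,f}\ul{\omega}_f - \DIFF\xi_h}\le\norm{f}{\DIFF\xi_h}\lesssim\opn{f}{\ul{\DIFF}^{n-k-1}_{r,f}\ul{\omega}_f}$ from~\eqref{eq:lift.bound.1}. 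Replacing $\alpha$ by $\alpha-\ltproj{r+1}{k}{f}\alpha$ in $T_1$ and applying Cauchy--Schwarz together with the standard approximation estimate then yields the first contribution to the right-hand side of~\eqref{eq:adjoint.consistency}.

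The main obstacle is the double-orthogonality argument in the $T_1$ step: the test space $\PLtrim{r+1}{k}$ is chosen exactly so that both $\DIFF\mu$ and $\tr_\pf\mu$ land in ranges where~\eqref{eq:lift.proj} produces annihilation---simultaneously on the top cell and on every $(\dtop-1)$-face of its boundary. This cancellation relies on the precise strength of the projection property of the conforming lifting across all cell dimensions and on the single-valuedness of its boundary traces, which are both guaranteed by Theorem~\ref{thm:conforming.lifting} and Remark~\ref{rem:piecewise.lifting}; summing the bounds on $T_1$ and $T_2$ gives~\eqref{eq:adjoint.consistency}.
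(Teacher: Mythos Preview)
Your proof is correct and takes a genuinely different route from the paper's. The paper first subtracts the discrete integration-by-parts identity~\eqref{eq:ipp.pot} tested against a carefully chosen approximant $\mu_f\in\PLtrim{r+1}{k}(f)$ furnished by Lemma~\ref{lem:approx.PLtrim} (which simultaneously controls $\norm{f}{\alpha-\mu_f}$ and $\norm{f}{\DIFF(\alpha-\mu_f)}$), obtaining three terms; the boundary term is then handled by inserting the lifting on the $(\dtop-1)$-faces, subtracting $\alpha$, and integrating by parts back to the interior. You instead subtract the continuous Stokes identity for the lifting $\xi_h$ at the very start, which immediately produces the two volume terms $T_1,T_2$ with no boundary residual. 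Your key new observation is the \emph{double-orthogonality} argument for $T_1$: combining~\eqref{eq:ipp.pot} with the continuous Green formula for $\xi_h$ and invoking~\eqref{eq:lift.proj} on both $f$ and every $f'\in\FM{\dtop-1}(\pf)$ shows that $\DIFF^{n-k-1}_{r,f}\ul{\omega}_f-\DIFF\xi_h$ annihilates all of $\PLtrim{r+1}{k}(f)$, so an ordinary trimmed projection of $\alpha$ suffices. This bypasses Lemma~\ref{lem:approx.PLtrim} entirely, since $T_1$ and $T_2$ are decoupled and can use separate projections (of $\alpha$ and of $\DIFF\alpha$, respectively). The paper's approach has the advantage of isolating Lemma~\ref{lem:approx.PLtrim} as an independent approximation result in trimmed spaces; yours is shorter and exploits the projection property~\eqref{eq:lift.proj} more fully, making transparent that this property is precisely calibrated to kill both the volume and the face contributions simultaneously.

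Two minor remarks: the global Stokes identity for $\alpha\wedge\xi_h$ should technically be justified simplex by simplex on $\Sh$ (since $\xi_h$ is only piecewise polynomial and $\alpha$ only piecewise $H^{r+2}$), with internal face contributions cancelling by single-valuedness of both traces---you allude to this via Remark~\ref{rem:piecewise.lifting} but it is worth stating that the continuity of $\alpha$ handles the cell interfaces; and the identification of $\ltproj{r+1}{k}{f}\alpha$ as the right object to subtract relies on the fact that the wedge orthogonality you established is against $\PLtrim{r+1}{k}(f)$, not $\star^{-1}\PLtrim{r+1}{k}(f)$, so any element of $\PLtrim{r+1}{k}(f)$ approximating $\alpha$ would do (the $L^2$-projector is simply the most convenient choice).
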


\begin{proof}
  See Section \ref{sec:proof.consistency}.
\end{proof}

In practical situations, a version of adjoint consistency written in terms of discrete inner products can also be useful. In the following corollary, $(\cdot,\cdot)_{k,h}$ denotes the global $L^2$-inner product on $\uH{k}{h}$ obtained by assembling the local inner products \eqref{eq:def.local.inner.product} on top-dimensional cells:
\[
(\ul{\omega}_h,\ul{\mu}_h)_{k,h}\coloneq\sum_{f\in\FM{\dtop}(\Mh)}(\ul{\omega}_f,\ul{\mu}_f)_{k,f}.
\]

\begin{corollary}[Adjoint consistency with discrete inner product]\label{cor:adjoint.consistency.inner}
  Let $k\in\{1,\ldots,n\}$ and $s\ge 1$ be an integer such that $2s>\dtop$. Assume that $\zeta\in H^s\Lambda^k(\Omega)\cap H^{r+2}\Lambda^k(\Mh)$ is such that $\tr_{\partial\Omega}\zeta=0$. Then, for all $\ul{\mu}_h\in\uH{k-1}{h}$,
  \begin{multline}\label{eq:adjoint.consistency.inner}
    \left|(\uI{k}{h}\zeta, \ul{\DIFF}^{k-1}_{r,h}\ul{\mu}_h)_{k,h}-\int_\Omega \delta\zeta\wedge \star P^{k-1}_{r,h}\ul{\mu}_h\right|
    \\
    \lesssim h^{r+1}\left[\left(\seminorm{H^{(r+1,s)}\Lambda^k(\Mh)}{\zeta}+\seminorm{H^{r+1}\Lambda^k(\Mh)}{\zeta}\right)\opn{h}{\ul{\DIFF}^{k-1}_{r,h}\ul{\mu}_h}+\seminorm{H^{r+1}\Lambda^{k+1}(\Mh)}{\delta\zeta}\opn{h}{\ul{\mu}_h}\right],
  \end{multline}
  where $\delta=(-1)^k\star^{-1}\DIFF\star$ is the co-differential, and the global norm $\seminorm{H^{(r+1,s)}\Lambda^k(\Mh)}{{\cdot}}$ is defined in a similar way as \eqref{eq:def.Hr.global}.
\end{corollary}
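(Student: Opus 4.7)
The plan is to combine the inner product consistency~\eqref{eq:inner.product.consistency} with Theorem~\ref{thm:adjoint.consistency} applied to the Hodge dual $\star\zeta$.

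First, applying~\eqref{eq:inner.product.consistency} locally with $\omega=\zeta$ and $\ul{\mu}_f$ replaced by $\ul{\eta}_f\coloneq\ul{(\ul{\DIFF}^{k-1}_{r,h}\ul{\mu}_h)}_f$, summing over $f\in\FM{n}(\Mh)$ and invoking Cauchy--Schwarz on the global norm, yields
\[
\Bigl|(\uI{k}{h}\zeta,\ul{\DIFF}^{k-1}_{r,h}\ul{\mu}_h)_{k,h}-\int_\Omega\zeta\wedge\star P^k_{r,h}\ul{\DIFF}^{k-1}_{r,h}\ul{\mu}_h\Bigr|\lesssim h^{r+1}\seminorm{H^{(r+1,s)}\Lambda^k(\Mh)}{\zeta}\opn{h}{\ul{\DIFF}^{k-1}_{r,h}\ul{\mu}_h},
\]
producing the first error contribution in~\eqref{eq:adjoint.consistency.inner}.

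Second, I would prove the local commutation $P^k_{r,f}\ul{\eta}_f=\DIFF^{k-1}_{r,f}\ul{\mu}_f$ on each top-dimensional cell $f$. Both sides belong to $\PL{r}{k}(f)$, so it suffices to check their pairings against the decomposition $\KOSZUL_f\PL{r}{n-k}(f)\times\KOSZUL_f\PL{r-1}{n-k+1}(f)$ used in~\eqref{eq:def.pot}. The $\KOSZUL_f\PL{r-1}{n-k+1}(f)$-piece follows from the inclusion $\KOSZUL_f\PL{r-1}{n-k+1}(f)\subset\PLtrim{r}{n-k}(f)$ and the projection structure $\eta_f=\star^{-1}\ltproj{r}{n-k}{f}\star\DIFF^{k-1}_{r,f}\ul{\mu}_f$; the $\DIFF\KOSZUL_f\PL{r}{n-k}(f)$-piece relies on $\DIFF^2=0$, on the complex property $\ul{\DIFF}^{k}_{r,h}\ul{\DIFF}^{k-1}_{r,h}=0$, and on the recursive definition of the potential on boundary faces via~\eqref{eq:def.d}. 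Substituting this identity turns $\int_\Omega\zeta\wedge\star P^k_{r,h}\ul{\DIFF}^{k-1}_{r,h}\ul{\mu}_h$ into $\int_\Omega\zeta\wedge\star\DIFF^{k-1}_{r,h}\ul{\mu}_h$ with no extra error.

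Third, the symmetry $\zeta\wedge\star\DIFF^{k-1}_{r,h}\ul{\mu}_h=\DIFF^{k-1}_{r,h}\ul{\mu}_h\wedge\star\zeta$ on $k$-forms brings in $\alpha\coloneq\star\zeta$, to which I apply Theorem~\ref{thm:adjoint.consistency} with form degree $n-k$ and $\ul{\omega}_h\coloneq\ul{\mu}_h\in\uH{n-(n-k)-1}{h}=\uH{k-1}{h}$. The Hodge-star invariance of the Sobolev seminorms in the Euclidean setting identifies $\seminorm{H^j\Lambda^{n-k}(\Mh)}{\star\zeta}$ with $\seminorm{H^j\Lambda^k(\Mh)}{\zeta}$. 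A sign computation using $\DIFF(\star\zeta)=(-1)^k\star\delta\zeta$ and the symmetry of the pairing on $(k-1)$-forms then converts $(-1)^{n-k+1}\int_\Omega\DIFF(\star\zeta)\wedge P^{k-1}_{r,h}\ul{\mu}_h$ into $\int_\Omega\delta\zeta\wedge\star P^{k-1}_{r,h}\ul{\mu}_h$. Adding the errors from Steps~1 and~3 completes~\eqref{eq:adjoint.consistency.inner}.

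The main obstacle is the commutation identity in Step~2: establishing $P^k_{r,f}\ul{\eta}_f=\DIFF^{k-1}_{r,f}\ul{\mu}_f$ requires a careful unwinding of~\eqref{eq:def.d}--\eqref{eq:def.pot} and Stokes' formula on $(n-1)$-cells in order to produce the boundary cancellations on $\partial f$. A secondary technical point is the sign-and-trace bookkeeping in Step~3, and in particular checking that the hypothesis $\tr_{\partial\Omega}\zeta=0$ delivers what is needed to apply Theorem~\ref{thm:adjoint.consistency} with $\alpha=\star\zeta$.
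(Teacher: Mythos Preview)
Your proposal is correct and follows essentially the same route as the paper: apply the inner-product consistency~\eqref{eq:inner.product.consistency}, use the commutation $P^k_{r,h}\ul{\DIFF}^{k-1}_{r,h}=\DIFF^{k-1}_{r,h}$, and then invoke Theorem~\ref{thm:adjoint.consistency} with $\alpha=\star\zeta$ and $k\leftarrow n-k$. The identity you flag as ``the main obstacle'' in Step~2 is in fact a standing property of the DDR construction, quoted in the paper as \cite[Eq.~(3.31)]{Bonaldi.Di-Pietro.ea:24}, so no fresh derivation is required; the paper's sign bookkeeping in Step~3 proceeds via $a\wedge\star b=(-1)^{k(n-k)}\star a\wedge b$ rather than the inner-product symmetry you use, but the two are equivalent.
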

\begin{proof}
  See Section \ref{sec:proof.consistency}.
\end{proof}

\section{Proof of the consistency properties}\label{sec:proof.consistency}

We are now ready to prove the adjoint consistency estimates.

\begin{proof}[Proof of Theorem \ref{thm:adjoint.consistency}]
The assumption $\alpha\in C^0\Lambda^k(\overline{\Omega})\cap H^{r+2}\Lambda^k(\Mh)$ ensures that $\alpha\in H\Lambda^k(\Omega)$.
Since $\lift{n-k-1}{h}\ul{\omega}_h\in H\Lambda^{n-k-1}(\Omega)$ and $\tr_{\partial\Omega}\alpha=0$, the Stokes formula gives
\[
\int_\Omega \alpha\wedge \DIFF\lift{n-k-1}{h}\ul{\omega}_h+(-1)^{k}\int_\Omega \DIFF\alpha\wedge \lift{n-k-1}{h}\ul{\omega}_h=0.
\]
Subtracting this quantity from the term in the absolute value in the right-hand side of \eqref{eq:adjoint.consistency}, we infer
\begin{align*}
\int_\Omega \alpha\wedge {}&\DIFF^{n-k-1}_{r,h}\ul{\omega}_h+(-1)^{k}\int_\Omega \DIFF\alpha\wedge P^{n-k-1}_{r,h}\ul{\omega}_h\\
={}&\int_\Omega \alpha\wedge (\DIFF^{n-k-1}_{r,h}\ul{\omega}_h-\DIFF\lift{n-k-1}{h}\ul{\omega}_h)
+(-1)^{k}\int_\Omega \DIFF\alpha\wedge (P^{n-k-1}_{r,h}\ul{\omega}_h-\lift{n-k-1}{h}\ul{\omega}_h)\\
={}&
\int_\Omega \alpha\wedge \star^{-1}\left[\ltproj{r+1}{k}{h}(\star\DIFF\lift{n-k-1}{h}\ul{\omega}_h)-\star\DIFF\lift{n-k-1}{h}\ul{\omega}_h\right]\\
& +(-1)^{k}\int_\Omega \DIFF\alpha\wedge \star^{-1}\left[\ltproj{r+1}{k+1}{h}(\star\lift{n-k-1}{h}\ul{\omega}_h)-\star\lift{n-k-1}{h}\ul{\omega}_h\right]
\end{align*}
where the conclusion is obtained applying \eqref{eq:proj.L.P.d} with $(d,k)\leftarrow (n,n-k-1)$, defining $\ltproj{r+1}{\ell}{h}:L^2\Lambda^\ell(\Omega)\to \PLtrim{r+1}{\ell}(\Mh)$ as the global $L^2$-orthogonal projector on the space of broken trimmed polynomial $\ell$-forms (this projectors consists in patching together the local projectors). Using the relation $\star^{-1}\zeta=(-1)^{\ell(n-\ell)}\star\zeta$ for an $\ell$-form $\zeta$, the definition \eqref{eq:def.L2.forms} of the $L^2$-inner product and the $L^2$-self-adjoint property of the $L^2$-orthogonal projector, we infer
\begin{align*}
\int_\Omega \alpha\wedge {}&\DIFF^{n-k-1}_{r,h}\ul{\omega}_h+(-1)^{k}\int_\Omega \DIFF\alpha\wedge P^{n-k-1}_{r,h}\ul{\omega}_h\\
={}&
\int_\Omega \left[\ltproj{r+1}{k}{h}\alpha-\alpha\right]\wedge \DIFF\lift{n-k-1}{h}\ul{\omega}_h
 +(-1)^{k}\int_\Omega \left[\ltproj{r+1}{k+1}{h}(\DIFF\alpha)-\DIFF\alpha\right]\wedge \lift{n-k-1}{h}\ul{\omega}_h.
\end{align*}
Invoking the Cauchy--Schwarz inequality as well as the bounds \eqref{eq:lift.bound.0}--\eqref{eq:lift.bound.1} on the lifting then leads to
\begin{align*}
\Bigg|\int_\Omega \alpha\wedge {}&\DIFF^{n-k-1}_{r,h}\ul{\omega}_h+(-1)^{k}\int_\Omega \DIFF\alpha\wedge P^{n-k-1}_{r,h}\ul{\omega}_h\Bigg|\\
\lesssim{}&
\norm{L^2\Lambda^k(\Omega)}{\ltproj{r+1}{k}{h}\alpha-\alpha} \opn{h}{\ul{\DIFF}^{n-k-1}_{r,h}\ul{\omega}_h}
+ \norm{L^2\Lambda^{k+1}(\Omega)}{\ltproj{r+1}{k+1}{h}(\DIFF\alpha)-\DIFF\alpha} \opn{h}{\ul{\omega}_h}.
\end{align*}
For each $f\in\FM{n}(\Mh)$ we have $\PL{r}{\ell}(f)\subset \PLtrim{r+1}{\ell}(f)$ so the projector $\ltproj{r+1}{\ell}{f}$ on $\PLtrim{r+1}{\ell}(f)$ has approximation properties in $L^2\Lambda^\ell(f)$ at least as good as those of the $L^2$-orthogonal projector on $\PL{r}{\ell}(f)$. Invoking these approximation properties (see \cite[Theorem 1.45]{Di-Pietro.Droniou:20}) concludes the proof of \eqref{eq:adjoint.consistency}.
\end{proof}

We conclude this section by proving the adjoint consistency estimates involving the discrete inner products.

\begin{proof}[Proof of Corollary \ref{cor:adjoint.consistency.inner}]
  By Sobolev embedding, since $2s>\dtop$ and $\zeta\in H^s\Lambda^k(\Omega)$ we have $\zeta\in C^0\Lambda^k(\overline{\Omega})$, so $\uI{k}{h}\zeta$ is well-defined and $\star\zeta$ satisfies the assumptions of Theorem \ref{thm:adjoint.consistency} (with $k\leftarrow n-k$; notice that the condition $k\ge 1$ in Corollary \ref{cor:adjoint.consistency.inner} translates into $n-k\le n-1$ after this substitution).

  Let $\mathfrak E$ be the left-hand side of \eqref{eq:adjoint.consistency.inner}. The consistency \eqref{eq:inner.product.consistency} of the local inner products and a Cauchy--Schwarz inequality show that
  \[
  \left|(\uI{k}{h}\zeta,\ul{\DIFF}^{k-1}_{r,h}\ul{\mu}_h)_{k,h}-\int_\Omega \zeta\wedge\star P^k_{r,h}\ul{\DIFF}^{k-1}_{r,h}\ul{\mu}_h\right|\lesssim
  h^{r+1}\seminorm{H^{(r+1,s)}(\Mh)}{\zeta}\opn{h}{\ul{\DIFF}^{k-1}_{r,h}\ul{\mu}_h},
  \]
  and thus, since $P^k_{r,h}\ul{\DIFF}^{k-1}_{r,h}=\DIFF^{k-1}_{r,h}$ by \cite[Eq.~(3.31)]{Bonaldi.Di-Pietro.ea:24},
  \begin{equation}\label{eq:adjoint.consistency.inner.1}
    \mathfrak E\lesssim \left| \int_\Omega \zeta\wedge\star \DIFF^{k-1}_{r,h}\ul{\mu}_h - \int_\Omega \delta\zeta\wedge \star P^{k-1}_{r,h}\ul{\mu}_h\right| + 
    h^{r+1}\seminorm{H^{(r+1,s)}(\Mh)}{\zeta}\opn{h}{\ul{\DIFF}^{k-1}_{r,h}\ul{\mu}_h}.
  \end{equation}
  If $a$ is a $k$-form and $b$ is an $(n-k)$ form, we have $a\wedge\star b=(-1)^{k(n-k)}\star a\wedge b$.
  Applying this relation to each integrand in \eqref{eq:adjoint.consistency.inner.1} and recalling that $\delta=(-1)^k\star^{-1}\DIFF\star$, we infer
  \begin{multline*}
    \mathfrak E\lesssim \left| (-1)^{k(n-k)}\int_\Omega \star\zeta\wedge \DIFF^{k-1}_{r,h}\ul{\mu}_h - (-1)^{(k-1)(n-k+1)}(-1)^k\int_\Omega \DIFF(\star\zeta)\wedge  P^{k-1}_{r,h}\ul{\mu}_h\right| \\
    + 
    h^{r+1}\seminorm{H^{(r+1,s)}(\Mh)}{\zeta}\opn{h}{\ul{\DIFF}^{k-1}_{r,h}\ul{\mu}_h}.
  \end{multline*}
  We conclude the proof by invoking the adjoint consistency estimate \eqref{eq:adjoint.consistency} with $k\leftarrow n-k$, $\ul{\omega}_h=\ul{\mu}_h$ and $\alpha=\star\zeta$, by noticing that $(-1)^{k(n-k)}\times (-1)^{(k+1)(n-k-1)}(-1)^k=(-1)^{n-k+1}$ and by recalling that $\star$ is an isometry.
\end{proof}

\section{Construction of the conforming lifting}\label{sec:construction.lifting}

\subsection{Cochain complexes: preliminaries and local problem}\label{sec:cochains}

Let us start by recalling some notions on chains and cochains.
On the simplicial submesh $\Sh(f)$ of a $d$-cell $f\in\Mh$, we form for $k\in\{0,\ldots,d\}$ the usual (simplicial) \emph{chain space} $\chain{k}(\Sh(f))$ by declaring a \emph{$k$-chain} to be any finite linear combination
\begin{equation}\label{eq:def.chain}
  w = \sum_{F \in \FM{k}(\Sh(f))} w_F\, F\quad \text{ with $w_F\in\Real$ for all $F\in\FM{k}(\Sh(f))$}.
\end{equation}
The \emph{support} of $w$ is the set of simplices $F$ for which $w_F \neq 0$.
The same construction is done, for $k\in\{0,\ldots,d-1\}$, on the boundary submesh $\Sh(\partial f)$.

We endow the chain spaces with the \emph{boundary operator} $\bd{k} : \chain{k}(\Sh(f)) \to \chain{k-1}(\Sh(f))$, defined on each $k$-simplex $F$ by summing its oriented $(k-1)$-simplices:
\[
\bd{k} F = \sum_{F' \in\FM{k-1}(\Sh(F))} \epsilon_{F,F'}\, F',
\]
where $\epsilon_{F,F'} = \pm 1$ depending if the orientation induced on $F'$ by $F$ matches the intrinsic orientation of $F'$ or not.

Within each chain space $\chain{k}(\Sh(f))$, we define the subspaces of \emph{$k$-cycles} as $\cycle{k}(\Sh(f)) \coloneqq \Ker \bd{k}$, and of \emph{$k$-boundaries} as $\boundary{k}(\Sh(f)) \coloneqq \Image \bd{k-1}$.
Since $\bd{k-1} \circ \bd{k} = 0$ (chain complex property), it follows that $\boundary{k}(\Sh(f)) \subset \cycle{k}(\Sh(f))$.
The $k$-th \emph{homology space} $H_k(\Sh(f))$ is then defined as the quotient
\[
H_k(\Sh(f)) \coloneqq \cycle{k}(\Sh(f)) / \boundary{k}(\Sh(f)).
\]
We briefly recall the standard notion of \emph{reduced homology} (see, e.g., \cite{Allen:02}), applied in the specific case of the (connected) simplicial mesh $\Sh(f)$.  
Fix a 0-simplex $F^* \in \FM{0}(\Sh(f))$, and define $\tilde C_0(\Sh(f))\coloneq \mathrm{span}\{F-F^*\,:\,F\in\FM{0}(\Sh(f))\backslash\{F^*\}\}$ (we also have $\tilde C_0(\Sh(f)) \approx \chain{0}(\Sh(f)) / \mathrm{span}\{ F^* \}$, this second space identifying $0$-chains differing by a multiple of $F^*$). For all $k > 0$, we set $\tilde C_k(\Sh(f)) \coloneqq \chain{k}(\Sh(f))$.
Notice that the boundary operator $\partial_1$ is compatible with this quotient since the boundary of any $1$-simplex $F$ is
$\partial_1 F = \epsilon_{F,F_1'}(F_1' - F^*) + \epsilon_{F,F_2'}(F_2' - F^*) \in \tilde C_0(\Sh(f))$, where $F_1', F_2' \in \FM{0}(\Sh(f))$ are the $0$-simplices of $F$ and $\epsilon_{F,F_1'} + \epsilon_{F,F_2'} = 0$.  
By definition, the $k$-th homology space of the modified chain complex is the \emph{reduced} $k$-th homology space $\tilde H_k(\Sh(f))$.
For $k > 0$, this agrees with the usual homology $H_k(\Sh(f))$, while $\tilde H_0(\Sh(f)) = \{0\}$ since $\Sh(f)$ is connected.

A \emph{$k$-cochain} on $\Sh(f)$ is a linear functional $\tilde \lambda : \chain{k}(\Sh(f)) \to \Real$.
Given a $k$-chain $w \in \chain{k}(\Sh(f))$, we denote $\tilde \lambda(w) \coloneqq \langle \tilde \lambda, w \rangle$.
The space of all $k$-cochains is denoted $\cochain{k}(\Sh(f))$.
Its canonical basis is formed by the \emph{elementary cochains} $\{\hat F \}_{F \in \FM{k}(\Sh(f))}$, dual to each $k$-simplex, that satisfy $\langle \hat F, F' \rangle = \delta_{F,F'}$.
Accordingly, any $\tilde \lambda \in \cochain{k}(\Sh(f))$ decomposes uniquely as
\[
\tilde \lambda = \sum_{F \in \FM{k}(\Sh(f))} \tilde \lambda_F\, \hat F,
\qquad
\tilde \lambda_F \coloneqq \langle \tilde \lambda, F \rangle,
\]
and we refer to $\tilde \lambda_F$ as the \emph{coefficient} of $\tilde \lambda$ on the $k$-simplex $F$.

The \emph{coboundary operator} $\cobd{k} : \cochain{k}(\Sh(f)) \to \cochain{k+1}(\Sh(f))$ is defined as the adjoint of the boundary: for every $k$-cochain $\tilde \lambda \in \cochain{k}(\Sh(f))$ and any $(k+1)$-chain $w$,
\begin{equation}\label{eq:cobd}
  \langle \cobd{k} \tilde \lambda,\, w \rangle = \langle \tilde \lambda,\, \bd{k+1} w \rangle.
\end{equation}
One verifies that $\cobd{k+1} \circ \cobd{k} = 0$, yielding a cochain complex on $\Sh(f)$.

Finally, any $k$-chain $w \in \chain{k}(\Sh(f))$ restricts to a $k$-chain $\restr{w}{\partial f} \in \chain{k}(\Sh(\partial f))$ simply by discarding, in \eqref{eq:def.chain}, all $k$-simplices that do not belong to $\Sh(\pf)$.
Since, for any $0 \le k \le d-1$, $\chain{k}(\Sh(\pf))\subset\chain{k}(\Sh(f))$, any $k$-cochain $\tilde \lambda \in \cochain{k}(\Sh(f))$ admits a \emph{trace} $\ctr^k_{\partial f} \tilde \lambda \in \cochain{k}(\Sh(\partial f))$, defined by restricting the domain to $\chain{k}(\Sh(\partial f))$.

\medskip

We now state and prove an existence result for a local problem in cochain spaces, 
building on the theory developed in \cite{Pitassi.Ghiloni.ea:22}. 
This problem will be essential to solving, in trimmed polynomial spaces, the local problems at the core of the construction of the lifting.

\begin{lemma}[Local boundary value problem for cochain maps]
  \label{lem:cochain.problem}
  Let $f \in \FM{d}(\Mh)$ be a $d$-cell with $d \geq 1$ and $k\in\{0,\ldots,d\}$ be a cochain degree.
  Let $\tilde{\xi} \in \cochain{k+1}(\Sh(f))$ be such that $\cobd{k+1} \tilde{\xi} = 0$, and $\tilde{\theta} \in \cochain{k}(\Sh(\partial f))$ be such that the following compatibility condition holds:
  \begin{subequations}\label{eq:local.pb.cochain.cond}
    \begin{align}
      \label{eq:cochain.condition.d=k+1}
      \text{If $d=k+1$}:&\qquad \sum_{F\in\FM{k+1}(\Sh(f))}\tilde{\xi}_F = \sum_{F'\in\FM{k}(\Sh(\partial f))}\tilde{\theta}_{F'},\\
      \label{eq:cochain.condition.d>=k+2}
      \text{If $d\ge k+2$}:&\qquad \ctr^{k+1}_{\partial f} \tilde{\xi}=\cobd{k} \tilde{\theta},
    \end{align}
  \end{subequations}
  where each $F\in\FM{k+1}(\Sh(f))$ and each $F'\in\FM{k}(\Sh(\partial f))$ appearing in the sums in condition \eqref{eq:cochain.condition.d=k+1} carries the orientation induced by $f$ (which is possible since $f$ has dimension $d=k+1$ in this case).
  Then, there exists $\tilde{\lambda} \in \cochain{k}(\Sh(f))$ such that
  \begin{subequations} \label{eq:cochain.problem}
    \begin{empheq}[left=\empheqlbrace]{align}
      \cobd{k} \tilde{\lambda} &= \tilde{\xi},  \label{eq:cochain.problem.internal} \\
      \ctr^k_{\partial f} \tilde{\lambda} &= \tilde{\theta}, \label{eq:cochain.problem.boundary}
    \end{empheq}
  \end{subequations}
  and
  \begin{equation}\label{eq:cochain.problem.estimates}
    \sum_{F\in\FM{k}(\Sh(f))}\tilde{\lambda}_F^2\lesssim \sum_{F\in\FM{k+1}(\Sh(f))}\tilde{\xi}_F^2
    +\sum_{F\in\FM{k}(\Sh(\partial f))}\tilde{\theta}_{F}^2.
  \end{equation}
\end{lemma}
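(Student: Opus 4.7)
The plan is to reduce to a problem with homogeneous boundary trace and then invoke relative cohomology on $\Sh(f)$, which triangulates the topological ball $f$.

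First, I would lift $\tilde{\theta}$ to $\tilde{\theta}_{\mathrm{ext}}\in \cochain{k}(\Sh(f))$ by setting its coefficient on $F\in\FM{k}(\Sh(f))$ equal to $\tilde{\theta}_F$ when $F\in\FM{k}(\Sh(\partial f))$ and to $0$ otherwise, so that $\ctr^k_{\partial f}\tilde{\theta}_{\mathrm{ext}}=\tilde{\theta}$ and $\sum_F(\tilde{\theta}_{\mathrm{ext}})_F^2\le\sum_{F'}\tilde{\theta}_{F'}^2$. Solving \eqref{eq:cochain.problem} then amounts to finding $\tilde{\mu}$ in the relative subcomplex $\cochain{k}_0(\Sh(f))\coloneq \Ker \ctr^{k}_{\partial f}$ satisfying $\cobd{k}\tilde{\mu}=\tilde{\eta}$, where $\tilde{\eta}\coloneq\tilde{\xi}-\cobd{k}\tilde{\theta}_{\mathrm{ext}}$, and then taking $\tilde{\lambda}=\tilde{\mu}+\tilde{\theta}_{\mathrm{ext}}$. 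That $\cobd{k}$ maps $\cochain{k}_0(\Sh(f))$ into $\cochain{k+1}_0(\Sh(f))$ follows from the commutation identity $\ctr^{k+1}_{\partial f}\circ\cobd{k}=\cobd{k}\circ\ctr^k_{\partial f}$, a direct consequence of \eqref{eq:cobd} and the fact that the boundary of a simplex of $\Sh(\partial f)$ remains in $\Sh(\partial f)$.

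Next, I would verify that $\tilde{\eta}$ is a relative coboundary. Since $f$ is homeomorphic to a closed $d$-ball, the relative cohomology of $(\Sh(f),\Sh(\partial f))$ is isomorphic to $\Real$ in degree $d$ and trivial otherwise; thus a relative cocycle is a relative coboundary precisely when either its degree is less than $d$, or its pairing with the relative fundamental class vanishes. In the case $d\ge k+2$, one checks (i) $\cobd{k+1}\tilde{\eta}=\cobd{k+1}\tilde{\xi}-\cobd{k+1}\cobd{k}\tilde{\theta}_{\mathrm{ext}}=0$ and (ii) $\ctr^{k+1}_{\partial f}\tilde{\eta}=\ctr^{k+1}_{\partial f}\tilde{\xi}-\cobd{k}\tilde{\theta}=0$ by the commutation identity and \eqref{eq:cochain.condition.d>=k+2}, so $\tilde{\eta}\in\cochain{k+1}_0(\Sh(f))$ is a cocycle of degree $k+1<d$, hence exact. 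In the case $d=k+1$, both $\cochain{d+1}(\Sh(f))$ and $\cochain{d}(\Sh(\partial f))$ are trivial, making the cocycle and relative conditions on $\tilde{\eta}$ automatic, and the only obstruction is the pairing with the fundamental class $f_*=\sum_{F\in\FM{d}(\Sh(f))}F$ (simplices oriented by $f$); using that $\bd{d}f_*$ equals the fundamental chain of $\partial f$ (interior $(d-1)$-simplices cancelling pairwise by opposite induced orientations),
\[
\langle\tilde{\eta},f_*\rangle = \sum_{F\in\FM{d}(\Sh(f))}\tilde{\xi}_F - \sum_{F'\in\FM{k}(\Sh(\partial f))}\tilde{\theta}_{F'}=0
\]
by \eqref{eq:cochain.condition.d=k+1}, so $\tilde{\eta}$ is again a relative coboundary.

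Finally, I would select $\tilde{\mu}$ as the minimum-$\ell^2$-norm solution of $\cobd{k}\tilde{\mu}=\tilde{\eta}$ in $\cochain{k}_0(\Sh(f))$, equivalently the image of $\tilde{\eta}$ under the Moore--Penrose pseudoinverse of $\cobd{k}:\cochain{k}_0(\Sh(f))\to\cochain{k+1}_0(\Sh(f))$. Since $\card(\Sh(f))\lesssim 1$ by mesh regularity, the pair $(\Sh(f),\Sh(\partial f))$ has one of finitely many combinatorial types; in each, $\cobd{k}$ is represented by a $\{-1,0,1\}$-matrix of bounded size whose pseudoinverse has operator norm bounded by a constant depending only on $\rho$. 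This yields $\sum_F\tilde{\mu}_F^2\lesssim \sum_F\tilde{\eta}_F^2\lesssim \sum_F\tilde{\xi}_F^2+\sum_{F'}\tilde{\theta}_{F'}^2$, and \eqref{eq:cochain.problem.estimates} follows for $\tilde{\lambda}=\tilde{\mu}+\tilde{\theta}_{\mathrm{ext}}$ by the triangle inequality. The main obstacle is this final uniformity statement: existence is a clean topological consequence of the ball-like nature of $f$, whereas the constant in \eqref{eq:cochain.problem.estimates} must be made independent of the specific cell, which relies on the reduction to finitely many combinatorial configurations afforded by shape-regularity.
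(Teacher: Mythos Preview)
Your argument is correct and takes a genuinely different route from the paper. The paper builds $\tilde{\lambda}$ by an explicit formula: it fixes a direct-sum complement $\cspace{k}(\Sh(f))$ of $\cycle{k}(\Sh(\partial f))$ inside $\cycle{k}(\Sh(f))$, chooses a basis $\mc B_k$ of this complement together with a dual family of interior $k$-simplices $\mc F_k$ (via the algorithm of \cite[Appendix~C]{Di-Pietro.Droniou.ea:25}), picks fillings $w_z$ of each $z\in\mc B_k$, and then sets $\tilde{\lambda}$ to equal $\tilde{\theta}$ on boundary simplices, $\langle\tilde{\xi},w_z\rangle-\langle\tilde{\theta},\restr{z}{\partial f}\rangle$ on each $F_z\in\mc F_k$, and zero on all other interior simplices. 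The verification of \eqref{eq:cochain.problem.internal} is done by testing against a decomposition $\chain{k+1}(\Sh(f))=\cycle{k+1}(\Sh(f))\oplus\mathrm{span}\{w_z:z\in\mc B_k\}\oplus\mathrm{span}\{w_z:z\in\mc B_k^*\}$, and the estimate comes from the fact that all $z,w_z$ have $\{-1,0,1\}$ coefficients together with $\card(\Sh(f))\lesssim 1$.

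Your approach is cleaner on the existence side: reducing to the relative complex $\cochain{*}_0(\Sh(f))$ and invoking $H^{k+1}(D^d,S^{d-1})=0$ for $k+1<d$ (respectively, the fundamental-class pairing for $k+1=d$) packages the case analysis neatly. The cost is that your estimate is non-constructive, resting on a compactness argument (finitely many combinatorial types of the oriented pair $(\Sh(f),\Sh(\partial f))$ once $\card(\Sh(f))$ is bounded) rather than an explicit bound; this is perfectly valid, but the paper's formula has the advantage of being implementable and of making the dependence of the hidden constant on $\card(\Sh(f))$ explicit. Conversely, your $\tilde{\lambda}$, being the minimal-$\ell^2$ solution, is in principle the tightest one, whereas the paper's explicit $\tilde{\lambda}$ need not be.
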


\begin{proof}
  The proof is split in three steps. In the first one, we construct $\tilde \lambda$ explicitly by leveraging the theory developed in \cite{Di-Pietro.Droniou.ea:25}. The second and third step, respectively, check that the constructed $\tilde{\lambda}$ satisfies \eqref{eq:cochain.problem} and \eqref{eq:cochain.problem.estimates}.

  \smallskip
  \emph{Step 1: construction of $\tilde{\lambda}$}.
  Applying the algorithm in \cite[Appendix C]{Di-Pietro.Droniou.ea:25} yields a complement space $\cspace{k}(\Sh(f))$ for the boundary cycles, that is, a space such that
   \begin{equation}
    \cycle{k}(\Sh(f)) = \cycle{k}(\Sh(\partial f)) \oplus \cspace{k}(\Sh(f)),
    \label{eq:relative.decomposition}
  \end{equation}
  together with a basis $\mc B_k$ of $\cspace{k}(\Sh(f))$ and a dual family of $k$-simplices $\mc F_k\coloneq\{F_z\,:\,z\in\mc B_k\}$ of $\Sh(f)$, such that each $z = \sum_{F \in \FM{k}(\Sh(f))} z_F\,F\in\mc B_k$ satisfies the bound
  \begin{equation}\label{eq:bound.z}
    \sum_{F\in\FM{k}(\Sh(f))}z_F^2\lesssim 1
  \end{equation}
  and
  \begin{equation}\label{eq:prop.Fz}
    \begin{aligned}
      F_z \not\subset \partial f &\quad \forall F_z \in \mc F_k,\\
      \inner{F_z}{z'} = \delta_{z,z'}&\quad \forall z,z' \in \mc B_k.
    \end{aligned}
  \end{equation}
For $k = 0$, recall that $\cycle{0}(\Sh(f)) = \boundary{0}(\Sh(f)) \oplus \mathrm{span}\{F^*\}$, as in \cite[Eq.~(27)]{Di-Pietro.Droniou.ea:25}.  
Since $f$ is contractible, its reduced homology is trivial, and therefore all homology groups of $f$ vanish. Consequently, for each $k$-cycle $z \in \mc B_k$, there exists a $(k+1)$-chain $w_z \in \chain{k+1}(\Sh(f))$ such that
  \begin{equation}\label{eq:def.wz}
    \bd{k+1} w_z = z.
  \end{equation}
  Moreover, by \cite[Appendix C]{Di-Pietro.Droniou.ea:25} we can select $w_z$ such that, writing $w_z=\sum_{F' \in \FM{k+1}(\Sh(f))} w_{z,F'}\,F'$, the following bound holds:
  \begin{equation}\label{eq:bound.w}
  \sum_{F'\in\FM{k+1}(\Sh(f))} w_{z,F'}^2\lesssim 1.
  \end{equation}
  We now define $\tilde \lambda$ by setting its value on each $k$-simplex, on the boundary or the interior of $f$:
  \begin{enumerate}[label=\arabic*)]
  \item On the boundary: Set
    \begin{equation}\label{eq:t.lambda.boundary}
      \inner{\tilde{\lambda}}{F} = \inner{\tilde{\theta}}{F}
      \qquad \forall F\in\FM{k}(\Sh(\pf)).
    \end{equation}
  \item In the interior: For each $k$-simplex $F \in \FM{k}(\Sh(f))$ such that $F \not\subset \partial f$, set 
    \begin{equation}
      \inner{\tilde \lambda}{F} \coloneqq
      \begin{cases}
        \inner{\tilde \xi}{w_z} - \inner{\tilde \theta}{{\restr{z}{\partial f}}} &\text{if } F = F_z  \in \mc F_k, \\
        0 &\text{otherwise}.
      \end{cases}
      \label{eq:formula}
    \end{equation}
  \end{enumerate}

  \smallskip
  \emph{Step 2: verification of \eqref{eq:cochain.problem}}.
  The boundary equation \eqref{eq:cochain.problem.boundary} is trivially satisfied by construction of $\tilde{\lambda}$ on the boundary, so we only have to check the interior relation \eqref{eq:cochain.problem.internal}. 

  To this purpose, we first decompose $\chain{k+1}(\Sh(f))$. Let $\mc B_k^*$ be a basis of $\cycle{k}(\Sh(\partial f))$; since $f$ is contractible, for each $z\in\mc B_k^*\subset \cycle{k}(\Sh(f))$, there exists $w_z\in\chain{k+1}(\Sh(f))$ such that \eqref{eq:def.wz} holds. By \eqref{eq:relative.decomposition}, $\mc B_k\cup\mc B_k^*$ is a basis of $\cycle{k}(\Sh(f))=\Image\bd{k+1}$ (using again that $f$ is contractible), so the first isomorphism theorem applied to $\bd{k+1}$ yields
  \begin{equation*} 
    \chain{k+1}(\Sh(f))=\cycle{k+1}(\Sh(f))\oplus\mathrm{span}\{w_z\,:\,z\in\mc B_k\}\oplus\mathrm{span}\{w_z\,:\,z\in\mc B_k^*\}.
  \end{equation*}
  We will prove \eqref{eq:cochain.problem.internal} by showing that it holds when tested against chains in each space of this decomposition.

  \emph{Step 2a: verification of \eqref{eq:cochain.problem.internal} against $w\in\cycle{k+1}(\Sh(f))$}.
  If $w\in \cycle{k+1}(\Sh(f))$, then $\bd{k+1}w=0$ by definition and $w=\bd{k+2}t$ for some $t\in\chain{k+2}(\Sh(f))$, so
  \[
  \inner{\cobd{k}\tilde{\lambda}}{w}
  \overset{\eqref{eq:cobd}}=\inner{\tilde{\lambda}}{\bd{k+1}w}=0
  \quad\text{ and }\quad
  \inner{\tilde{\xi}}{w}=\inner{\tilde{\xi}}{\bd{k+2}t}=\inner{\cobd{k+1}\tilde{\xi}}{t}=0,
  \]
  which shows that $\inner{\cobd{k}\tilde{\lambda}}{w}=\inner{\tilde{\xi}}{w}$ as required.

  \emph{Step 2b: verification of \eqref{eq:cochain.problem.internal} against $w_z$ for $z\in\mc B_k$}.
  Since $\tilde \lambda$ vanishes on every interior $k$-simplex except those in $\mc F_k$ (see \eqref{eq:formula}) and $z$ has only $F_z$ (with coefficient 1) in its support as simplex of $\mc F_k$ (see \eqref{eq:prop.Fz}), we have $\inner{\tilde \lambda}{z - \restr{z}{\partial f} - F_z} = 0$.
  Hence, $\inner{\tilde \theta}{\restr{z}{\partial f}} \overset{\eqref{eq:t.lambda.boundary}}= \inner{\tilde \lambda}{\restr{z}{\partial f}} = \inner{\tilde \lambda}{z-F_z}$ and thus
  \[
  \inner{\tilde \lambda}{F_z}
  \overset{\eqref{eq:formula}}=
  \inner{\tilde \xi}{w_z} - \inner{\tilde \theta}{\restr{z}{\partial f}}
  \overset{\eqref{eq:cochain.problem.boundary}}=\inner{\tilde \xi}{w_z} - \inner{\tilde \lambda}{z - F_z}.
  \]
  We infer that
  \begin{equation*}
    \inner{\cobd{k}\tilde \lambda}{w_z}
    \overset{\eqref{eq:cobd}}=\inner{\tilde \lambda}{\bd{k+1}w_z}
    \overset{\eqref{eq:def.wz}}=\inner{\tilde \lambda}{z}
    =\inner{\tilde \lambda}{z -F_z}+\inner{\tilde \lambda}{F_z}
    =\inner{\tilde \xi}{w_z}.
  \end{equation*}

  \emph{Step 2c: verification of \eqref{eq:cochain.problem.internal} against $w_z$ for $z\in\mc B_k^*$}.
  We distinguish two cases depending on the dimension $d$ of $f$.
  If $d = k+1$, the space of $k$-cycles $Z_{k}(\Sh(\partial f))$ is $1$-dimensional (c.f.~\cite[Example~2.17]{Allen:02}), spanned by $z$ such that $w_z$ is the $(k+1)$-chain
  \begin{equation}\label{eq:cycle.d=k+1.wz}
    w_z = \sum_{F\in \FM{k+1}(\Sh(f))}F.
  \end{equation}
  In $\bd{k+1}w_z$, each internal $k$-simplex $F'\not\subset\pf$ appears in the boundary of two $(k+1)$-simplices in $\FM{k+1}(\Sh(f))$ with opposite orientations, so
  \begin{equation}\label{eq:cycle.d=k+1.z}
    z=\bd{k+1} w_z=\sum_{F'\in\FM{k}(\Sh(\pf))}F'.
  \end{equation}
  The proof is then concluded by writing
  \[
  \inner{\cobd{k}\tilde \lambda}{w_z} \overset{\eqref{eq:def.wz}}
  =\inner{\tilde \lambda}{z} \overset{\eqref{eq:t.lambda.boundary},\eqref{eq:cycle.d=k+1.z}}=\sum_{F'\in\FM{k}(\Sh(\pf))}\tilde{\theta}_{F'}\overset{\eqref{eq:cochain.condition.d=k+1}}= \sum_{F\in\FM{k+1}(\Sh(f))}\tilde{\xi}_F
  \overset{\eqref{eq:cycle.d=k+1.wz}}=\inner{\tilde \xi}{w_z}.
  \]

  Let us now assume that $d \ge k + 2$. Since $f$ is homeomorphic to a $d$-dimensional ball, $\pf$ is homeomorphic to a $(d-1)$-dimensional sphere; the reduced homology group $\tilde H_k(\Sh(\partial f))$ is thus isomorphic to the reduced homology group of the $(d-1)$-sphere, and is therefore zero (as $k<d-1$). As a consequence, the chain $w_z$ previously selected in $\chain{k+1}(\Sh(f))$ can actually be chosen in the space $\chain{k+1}(\Sh(\pf))$ of chains on the boundary. We then conclude that \eqref{eq:cochain.problem.internal} holds on $w_z$ by writing
  \[
  \inner{\tilde{\xi}}{w_z}
  = \inner{\ctr^k_{\partial f}\tilde{\xi}}{w_z}
  \overset{\eqref{eq:cochain.condition.d>=k+2}}=
  \inner{\cobd{k}\tilde{\theta}}{w_z}
  \overset{\eqref{eq:t.lambda.boundary}}=
  \inner{\cobd{k}\tilde{\lambda}}{w_z}.
  \]

  \smallskip
  \emph{Verification of \eqref{eq:cochain.problem.estimates}.}
  We split the sum in the left-hand side of \eqref{eq:cochain.problem.estimates} in three blocs and use the definition of $\tilde{\lambda}$ to write
  \begin{align}
    \sum_{F\in\FM{k}(\Sh(f))}\tilde{\lambda}_F^2={}&
    \sum_{F\in\FM{k}(\Sh(\pf))}\tilde{\lambda}_F^2+
    \sum_{F\in\FM{k}(\Sh(f)),F\not\subset\pf,F\not\in\mc F_k}\tilde{\lambda}_F^2\nonumber\\
    &+\sum_{F\in\FM{k}(\Sh(f)),F=F_z\in\mc F_k}\tilde{\lambda}_F^2\nonumber\\
    ={}&
    \sum_{F\in\FM{k}(\Sh(\pf))}\tilde{\theta}_F^2
    +\sum_{z\in\mc B_k}\left(\inner{\tilde \xi}{w_z} - \inner{\tilde \theta}{\restr{z}{\partial f}}\right)^2.
    \label{eq:estim.cochain.1}
  \end{align}
  For the term in the last sum, we use the bounds \eqref{eq:bound.z} and \eqref{eq:bound.w} together with Cauchy--Schwarz inequalities to see that \cite[Eq.~(50)]{Di-Pietro.Droniou.ea:25}
  \[
  \left(\inner{\tilde \xi}{w_z} - \inner{\tilde \theta}{\restr{z}{\partial f}}\right)^2
  \lesssim \sum_{F\in\FM{k+1}(\Sh(f))}\tilde{\xi}_F^2 + \sum_{F\in\FM{k}(\Sh(\pf))}\tilde{\theta}_F^2.
  \]
  Summing this over $z\in\mc F_k$, noting that $\card(\mc F_k)\lesssim 1$ and plugging the resulting estimate into \eqref{eq:estim.cochain.1} concludes the proof of \eqref{eq:cochain.problem.estimates}.
\end{proof}

\subsection{Local problems in trimmed polynomial spaces}\label{sec:local.problems}

This section presents two existence results to local problems in spaces of finite element forms (trimmed polynomial spaces) that will form the foundations of the lifting operator.
Given a $d$-cell $f$ and a polynomial degree $s\ge 1$, we denote by
\[
\PLtrim{s}{k}(\Sh(f))\coloneq\{\omega\in H\Lambda^k(f)\,:\,\restr{\omega}{F}\in\PLtrim{s}{k}(F)\quad\forall F\in\FM{d}(\Sh(f))\}
\]
the space of conforming trimmed polynomial $k$-forms of degree $s$ on the mesh $\Sh(f)$. We space $\PLtrim{s}{k}(\Sh(\partial f))$ denotes the restriction to $\Sh(\partial f)$ of these forms, and coincides with forms whose restriction to each $f'\in\FM{d-1}(f)$ belong to $\PLtrim{s}{k}(\Sh(f'))$ and with single-valued traces on $f'\cap f''$ for all $f',f''\in\FM{d-1}(f)$.
If $\mu\in \PLtrim{s}{k}(\Sh(f))$, with an abuse of notation we write $\tr_{\partial f}\mu$ and $\DIFF \tr_{\partial f}\mu$ for the functions on $\partial f$ such that, for all $f'\in\FM{d-1}(f)$, $(\tr_{\partial f}\mu)|_{f'}=\tr_{f'}\mu$ and $(\DIFF\tr_{\partial f}\mu)|_{f'}=\DIFF \tr_{f'}\mu$.

\begin{lemma}[Local boundary value problem in finite element spaces]
  \label{lem:first.local.problem}
  Let $f \in \FM{d}(\Mh)$ be a $d$-cell with $d \geq 1$, $k\in\{0,\ldots,d\}$ be a form degree, and $s \ge 1$ be a polynomial degree.
  Let $\xi \in \PLtrim{s}{k+1}(\Sh(f))$ be such that $\DIFF \xi = 0$, and $\theta \in \PLtrim{s+1}{k}(\Sh(\partial f))$ be such that the following compatibility condition holds:
  \begin{subequations}\label{eq:local.pb.cond}
    \begin{align}
      \label{eq:condition.d=k+1}
      \text{If $d=k+1$}:&\qquad \int_{f} \xi = \int_{\partial f} \theta,\\
      \label{eq:condition.d>=k+2}
      \text{If $d\ge k+2$}:&\qquad \tr_{\partial f} \xi = \DIFF \theta.
    \end{align}
  \end{subequations}
  Then, there exists $\lambda \in \PLtrim{s+1}{k}(\Sh(f))$ such that
  \begin{subequations} \label{eq:local.pb}
    \begin{empheq}[left=\empheqlbrace]{align}
      \DIFF \lambda &= \xi,  \label{eq:local.pb.diff} \\
      \tr_{\partial f} \lambda &= \theta \label{eq:local.pb.boundary}
    \end{empheq}
  \end{subequations}
  and
  \begin{equation}\label{eq:local.pb.estimate}
    \norm{f}{\lambda}^2
    \lesssim h_f \norm{\partial f}{\theta}^2
    + h_f^2 \norm{f}{\xi}^2.
  \end{equation}
\end{lemma}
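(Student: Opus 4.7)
The plan is to reduce the problem to a cochain-level one via integration over simplices, apply Lemma~\ref{lem:cochain.problem}, and then lift the cochain solution back to a trimmed polynomial form in two stages: a piecewise Whitney interpolant absorbing the ``topological'' content, followed by a bubble correction that fixes the higher-order Arnold--Falk--Winther moments.

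For the cochain reduction, define $\tilde{\xi}\in\cochain{k+1}(\Sh(f))$ and $\tilde{\theta}\in\cochain{k}(\Sh(\partial f))$ by $\langle\tilde{\xi},F\rangle=\int_F\xi$ for each $F\in\FM{k+1}(\Sh(f))$ and $\langle\tilde{\theta},F'\rangle=\int_{F'}\theta$ for each $F'\in\FM{k}(\Sh(\partial f))$. Stokes' theorem on each $(k+2)$-simplex of $\Sh(f)$, combined with $\DIFF\xi=0$, yields $\cobd{k+1}\tilde{\xi}=0$. When $d=k+1$, summing $\int_F\xi$ over $F\in\FM{k+1}(\Sh(f))$ oriented by $f$ and using \eqref{eq:condition.d=k+1} gives \eqref{eq:cochain.condition.d=k+1}; when $d\ge k+2$, integrating \eqref{eq:condition.d>=k+2} over each $(k+1)$-simplex of $\Sh(\partial f)$ and applying Stokes on the right-hand side yields \eqref{eq:cochain.condition.d>=k+2}. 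Lemma~\ref{lem:cochain.problem} then furnishes a cochain $\tilde{\lambda}\in\cochain{k}(\Sh(f))$ satisfying \eqref{eq:cochain.problem} and the bound \eqref{eq:cochain.problem.estimates}.

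Form the piecewise Whitney lift $\lambda_W\in\PLtrim{1}{k}(\Sh(f))\subset\PLtrim{s+1}{k}(\Sh(f))$ of $\tilde{\lambda}$, i.e.~the unique Whitney $k$-form on $\Sh(f)$ with $\int_F\lambda_W=\tilde{\lambda}_F$ for every $F\in\FM{k}(\Sh(f))$. The classical commutation of the Whitney map with $\DIFF$ gives $\int_G\DIFF\lambda_W=\langle\cobd{k}\tilde{\lambda},G\rangle=\int_G\xi$ for every $G\in\FM{k+1}(\Sh(f))$, and the cochain of $\tr_{\partial f}\lambda_W$ equals $\tilde{\theta}$ on $\Sh(\partial f)$. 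Hence the residuals $\zeta=\xi-\DIFF\lambda_W\in\PLtrim{s}{k+1}(\Sh(f))$ and $\tau=\theta-\tr_{\partial f}\lambda_W\in\PLtrim{s+1}{k}(\Sh(\partial f))$ have vanishing integrals over all $(k+1)$-simplices of $\Sh(f)$ and all $k$-simplices of $\Sh(\partial f)$, so only their higher-order Arnold--Falk--Winther moments may be non-zero. These are absorbed by a bubble correction $\lambda_B\in\PLtrim{s+1}{k}(\Sh(f))$ with vanishing integrals on $k$-simplices, constructed hierarchically from dimension $k+1$ up to $d$: on each subsimplex $\sigma$, the higher-order moments of $\lambda_B$ on $\sigma$ are selected to match those of $\zeta$ and $\tau$, the required compatibility being inherited from Stokes' theorem and from the vanishing of the lower-dimensional moments of $\zeta$ and $\tau$. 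Setting $\lambda=\lambda_W+\lambda_B$ then produces a solution of \eqref{eq:local.pb}.

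The estimate \eqref{eq:local.pb.estimate} is obtained by chaining the cochain bound \eqref{eq:cochain.problem.estimates} with the standard scaling of Whitney forms on mesh-regular simplices of size $\sim h_f$, namely $\norm{f}{\lambda_W}^2\lesssim h_f^{d-2k}\sum_F\tilde{\lambda}_F^2$, together with inverse-trace inequalities translating the cochain moments into $L^2$-norms on $f$ and $\partial f$ -- giving $\sum_F\tilde{\xi}_F^2\lesssim h_f^{2k+2-d}\norm{f}{\xi}^2$ and $\sum_{F'}\tilde{\theta}_{F'}^2\lesssim h_f^{2k+1-d}\norm{\partial f}{\theta}^2$, which combine with the Whitney scaling to produce exactly the powers $h_f^2$ and $h_f$ demanded by the statement -- plus a local stability estimate for the bubble correction on each simplex. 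The main obstacle is the bubble correction: carrying it out uniformly across all dimensions requires fine control of the local trimmed polynomial complex with homogeneous traces on each subsimplex and of the commuting diagram between the Whitney map and $\DIFF$, both of which ultimately rest on Arnold--Falk--Winther unisolvence and on the Poincaré lemma for trimmed polynomials on (topologically trivial) simplicial regions.
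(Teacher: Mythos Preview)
Your plan is essentially the paper's proof: the paper also applies the de Rham map to obtain cochains, invokes Lemma~\ref{lem:cochain.problem} for $\tilde\lambda$, subtracts the Whitney lift $W_{\Sh(f)}^k\tilde\lambda$ to reduce to zero-average data $(\wh\xi,\wh\theta)$, and then builds the correction $\wh\lambda$ (your $\lambda_B$) by prescribing its Arnold--Falk--Winther DOFs simplex-by-simplex in increasing dimension, with the estimate obtained exactly via the Whitney scaling and trace/inverse inequalities you indicate. One small slip: the hierarchical construction of $\lambda_B$ must begin at dimension $k$, not $k{+}1$, since for $s\ge 1$ a $k$-simplex carries higher-order moments beyond the integral and these must be set (to those of $\tau$ on boundary $k$-simplices, to zero on interior ones) before proceeding upward.
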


\begin{proof}
  We construct $\lambda$ by setting its degrees of freedom (DOFs), and the equations in \eqref{eq:local.pb} are checked by verifying the equality of the DOFs on the left- and right-hand side of each equation. We recall that the degrees of freedom of $\zeta\in\PLtrim{s}{\ell}(\Sh(f))$ are \cite{Arnold:18}
  \[
  \int_{F}\tr_F \zeta\wedge\mu\qquad \forall \mu\in \PL{s+\ell-d'-1}{d'-\ell}(F)\quad \forall F\in\FM{d'}(\Sh(f))\,,\quad\forall d'\ge \ell.
  \]
  The proof is split in three steps. First, we reduce the problem to forms that have a zero trace integral on the lowest-dimensional mesh entities. Then, we build the degrees of freedom for the reduced problem, and check \eqref{eq:local.pb}, by an inductive process on the dimension of the cells these DOFs are attached to. Finally, we check the estimate \eqref{eq:local.pb.estimate}.

  \medskip

  \noindent\emph{Step 1: reduction to zero-average forms}.
  For all $s\ge 1$, we denote by $R_{\Sh(f)}^*:\PLtrim{s}{*}(\Sh(f))\to \cochain{*}(\Sh(f))$ the graded de Rham map, defined by
  \begin{equation}\label{eq:de.rham.map}
    R_{\Sh(f)}^k\omega = \left(\int_F \tr_F\omega\right)_{F\in\FM{k}(\Sh(f))}\qquad\forall k\in\{0,\ldots,d\},\quad\forall \omega\in\PLtrim{s}{k}(\Sh(f)).
  \end{equation}
  Similarly, $R_{\Sh(\partial f)}^*$ is the de Rham graded map on $\Sh(\partial f)$ and we recall that these maps commute with the trace operator as well as the exterior derivative: $\ctr^k_{\partial f}\circ R_{\Sh(f)}^k=R_{\Sh(\partial f)}^k \circ \tr^k_{\partial f}$ and $\cobd{k}\circ R_{\Sh(f)}^k=R_{\Sh(f)}^{k+1}\circ \DIFF$. The inverse of the de Rham maps on $\PLtrim{1}{*}(\Sh(f))$ are the Whitney maps $W_{\Sh(f)}^*:\cochain{*}(\Sh(f))\to \PLtrim{1}{*}(\Sh(f))$ defined by
  \[
  W_{\Sh(f)}^k\tilde{\omega} = \sum_{F\in\FM{k}(\Sh(f))}\tilde{\omega}_F \Wbf_{F}^k\qquad\forall k\in\{0,\ldots,d\},\quad\forall\tilde{\omega}\in\cochain{k}(\Sh(f)),
  \]
  where $\Wbf_F^k$ is the Whitney form associated with $F$. The Whitney maps also commute with the trace and the exterior derivative.

  Let $\tilde{\xi}=R_{\Sh(f)}^{k+1}\xi$ and $\tilde{\theta}=R_{\Sh(\partial f)}^k\theta$. By commutation of the de Rham map with the trace and the exterior derivative, the condition $\DIFF\xi=0$ gives $\cobd{k+1}\tilde{\xi}=0$, and \eqref{eq:local.pb.cond} implies \eqref{eq:local.pb.cochain.cond}. There exists therefore $\tilde{\lambda}\in \cochain{k}(\Sh(f))$ solution of \eqref{eq:cochain.problem} that satisfies \eqref{eq:cochain.problem.estimates}.

  Instead of directly constructing $\lambda$ solution of \eqref{eq:local.pb}--\eqref{eq:local.pb.estimate}, we will construct $\wh{\lambda}:=\lambda-W_{\Sh(f)}^k\tilde{\lambda}\in\PLtrim{s+1}{k}(\Sh(f))$. Using
  again the commutation properties of the de Rham and Whitney maps, it is straightforward to see that $\lambda$ solves \eqref{eq:local.pb} if and only if $\wh{\lambda}$ solves
  \begin{subequations} \label{eq:local.pb.average}
    \begin{empheq}[left=\empheqlbrace]{align}
      \DIFF \wh{\lambda} &= \wh{\xi},  \label{eq:local.pb.average.diff} \\
      \tr_{\partial f} \wh{\lambda} &= \wh{\theta}, \label{eq:local.pb.average.boundary}
    \end{empheq}
  \end{subequations}
  where
  \[
  \wh{\xi}\coloneq \xi-W_{\Sh(f)}^{k+1}\tilde{\xi}\in\PLtrim{s}{k+1}(\Sh(f))\quad\text{ and }\quad
  \wh{\theta}\coloneq \theta-W_{\Sh(\partial f)}^k\tilde{\theta}\in\PLtrim{s+1}{k}(\Sh(\partial f)).
  \]
  The interest of \eqref{eq:local.pb.average} over \eqref{eq:local.pb} is that its data verify zero average conditions: since $R_{\Sh(f)}^{k+1}\circ W_{\Sh(f)}^{k+1}=\mathrm{Id}_{\cochain{k+1}(\Sh(f))}$ (and similarly for the boundary maps), we have
  \begin{equation}\label{eq:zero.average}
    R_{\Sh(f)}^{k+1}\wh{\xi}=0\quad\text{ and }\quad R_{\Sh(\partial f)}^k \wh{\theta}=0.
  \end{equation}
  Moreover, the same commutation properties as before show that 
  \begin{equation}\label{eq:diff.wh.xi}
    \DIFF\wh{\xi}=0
  \end{equation}
  and that \eqref{eq:condition.d>=k+2} still holds for $(\wh{\xi},\wh{\theta})$:
  \begin{equation}\label{eq:condition.d>=k+2.hat}
    \text{If $d\ge k+2$ then $\tr_{\partial f} \wh{\xi} = \DIFF \wh{\theta}$}.
  \end{equation}

  \medskip

  \noindent\emph{Step 2: construction of a solution to \eqref{eq:local.pb.average}}.
  We set the DOFs of $\wh{\lambda}$ on each $F\in\Delta_{d'}(\Sh(f))$ by induction on $d'\in\{k,\ldots,d\}$.

  \underline{Base case $d'=k$}. Let $F\in\FM{k}(\Sh(f))$. If $F\subset \partial f$, then we set the DOFs of $\wh{\lambda}$ on $F$ equal to those of $\wh{\theta}$. If $F\not\subset \partial f$, we simply set the DOFs of $\wh{\lambda}$ on $F$ to be zero.
  All the DOFs on $k$-simplices of $\wh{\lambda}$ are thus set in a way that ensures that the DOFs on those simplices on each side of \eqref{eq:local.pb.average.boundary} match.
  Notice that there are no DOFs for the equality \eqref{eq:local.pb.average.diff} between $(k+1)$-forms on the $k$-simplex $F$.

  By the second relation in \eqref{eq:zero.average}, this construction gives 
  \begin{equation}\label{eq:Rh.whlambda.zero}
    \int_F\tr_F\wh{\lambda}=0\qquad\forall F\in\FM{k}(\Sh(f)).
  \end{equation}

  \underline{Inductive step}. Let $d'\in \{k+1,\ldots,d\}$ and let us assume that all the DOFs of $\wh{\lambda}$ on simplices of dimension $\le d'-1$ have been set in a such a way that the associated DOFs on both sides of each equation of \eqref{eq:local.pb.average} match.

  Let $F\in\Delta_{d'}(\Sh(f))$ and assume first that $F\subset \partial f$. As before, we then impose the equality of the DOFs on $F$ on each side of \eqref{eq:local.pb.average.boundary}, which sets the DOFs of $\wh{\lambda}$ on $F$.
  Combining this choice with the induction hypothesis, all DOFs of $\tr_F\wh{\lambda}\in\PLtrim{s+1}{k}(F)$ and $\tr_F\wh{\theta}\in\PLtrim{s+1}{k}(F)$ on $F$ and its boundary simplices (of any dimension) match and, by unisolvence, we infer that
  \[
  \tr_{F}\wh{\lambda}=\tr_{F}\wh{\theta}.
  \]
  Applying the exterior derivative to this relation and accounting for its commutativity with the trace operator gives $\tr_F(\DIFF\wh{\lambda})=\tr_F(\DIFF\wh{\theta})=\tr_F\wh{\xi}$, where the last equality comes from the compatibility condition \eqref{eq:condition.d>=k+2.hat} (which is valid here since, if a $d'$-simplex $F$ is contained in $\partial f$, then $d>d'\ge k+1$). This shows that the DOFs on $F$ on each side of \eqref{eq:local.pb.average.diff} match, and concludes the check of \eqref{eq:local.pb.average} on the simplex $F$.

  Assume now that $F\in\Delta_{d'}(\Sh(f))$ is not contained in $\partial f$. We need to fix the value of $\int_F \tr_F\wh{\lambda}\wedge\mu$ for all $\mu\in \PL{s+k-d'}{d'-k}(F)$.
  To this end, we will use the following isomorphism (see \cite[Eq.~(2.17)]{Bonaldi.Di-Pietro.ea:24} and notice that $d'-k\ge 1$):
  \begin{equation}\label{eq:decomposition.PL}
    \begin{array}{rl}
      \KOSZUL_F\PL{s+k-d'}{d'-k}(F)\times \KOSZUL_F\PL{s+k-d'-1}{d'+1-k}(F)\to{}& \PL{s+k-d'}{d'-k}(F)\\
      (\omega,\zeta)\mapsto{}&\DIFF\omega+\zeta,
    \end{array}
  \end{equation}
  where $\KOSZUL_F$ is the Koszul operator on $F$.
  Writing, according to this isomorphism, $\mu= \DIFF\omega+\zeta$, we then set
  \begin{equation}\label{eq:def.DOF.lambda}
    \int_F \tr_F\wh{\lambda}\wedge \mu = (-1)^{k+1}\int_F \tr_F\wh{\xi}\wedge\omega+(-1)^k\int_{\partial F}\tr_{\partial F}\wh{\lambda}\wedge \tr_{\partial F}\omega. 
  \end{equation}
  By induction, all the DOFs of $(\tr_{F'}\wh{\lambda})_{F'\in\Delta_{d'-1}(F)}$ have been fixed and $\tr_{\partial F}\wh{\lambda}$ is thus fully determined. It remains to show that this choice ensures that the DOFs of both sides of \eqref{eq:local.pb.average.diff} on $F$ match, that is: for all $\nu\in \PL{s+k-d'}{d'-k-1}(F)$,
  \begin{equation}\label{eq:check.DOF.diff}
    \int_F \tr_F\wh{\xi}\wedge\nu = \int_F \DIFF(\tr_F\wh{\lambda})\wedge\nu
    =(-1)^{k+1}\int_F \tr_F\wh{\lambda}\wedge\DIFF\nu+\int_{\partial F}\tr_{\partial F}\wh{\lambda}\wedge\tr_{\partial F}\nu,
  \end{equation}
  the second equality following from Stokes' formula. We consider two cases.

  \emph{Case $d'=k+1$}. We need to verify \eqref{eq:check.DOF.diff} for all $\nu\in \PL{s-1}{0}(F)=\PL{0}{0}(F)\oplus \KOSZUL_F \PL{s-2}{1}(F)$ (see \eqref{eq:decomp.PL}). If $\nu\in \PL{0}{0}(F)$, then it is a constant scalar and \eqref{eq:check.DOF.diff} reduces to
  \[
  \int_F \tr_F\wh{\xi}=\int_{\partial F}\tr_{\partial F}\wh{\lambda}=\sum_{F'\in\FM{k}(F)}\int_{F'}\tr_{F'}\wh{\lambda},
  \]
  which trivially holds by \eqref{eq:zero.average} (which shows that the left-hand side vanishes) and \eqref{eq:Rh.whlambda.zero} (which shows that the right-hand side vanishes).

  If $\nu\in\KOSZUL_F \PL{s-2}{1}(F)\subset \KOSZUL_F\PL{s+k-d'}{d'-k}(F)$, we can invoke \eqref{eq:def.DOF.lambda} with $(\omega,\zeta)=(\nu,0)$, which precisely gives \eqref{eq:check.DOF.diff}. 

  \emph{Case $d'>k+1$}. By \eqref{eq:decomp.PL} we have 
  \[
  \PL{s+k-d'}{d'-k-1}(F)=\DIFF\PL{s+k-d'+1}{d'-k-2}(F)\oplus \KOSZUL_F \PL{s+k-d'-1}{d'-k}(F)
  \]
  and we therefore need to check \eqref{eq:check.DOF.diff} for $\nu$ in each space in this decomposition. If $\nu=\DIFF\rho$ for some $\rho\in \PL{s+k-d'+1}{d'-k-2}(F)$ then 
  \begin{align*}
    \int_F\tr_F\wh{\xi}\wedge\nu={}&(-1)^k\int_F\cancel{\DIFF(\tr_F\wh{\xi})}\wedge\rho+(-1)^{k+1}\int_{\partial F}\tr_{\partial F}\wh{\xi}\wedge\tr_{\partial F}\rho\\
    ={}&(-1)^{k+1}\int_{\partial F}\tr_{\partial F}(\DIFF\wh{\lambda})\wedge\tr_{\partial F}\rho
  \end{align*}
  where the cancellation in the first equality is justified by $\DIFF(\tr_F\wh{\xi})=\tr_F(\DIFF\wh{\xi})\overset{\eqref{eq:diff.wh.xi}}=0$, while the conclusion follows from $\tr_{\partial F}\wh{\xi}=\tr_{\partial F}(\DIFF\wh{\lambda})$, which holds by induction hypothesis -- which states that, for all $F'\in\FM{d'-1}(F)$, the DOFs on $F'$ and its sub-simplices of the trimmed polynomial forms $\tr_{F'}\wh{\xi}$ and $\tr_{F'}(\DIFF\wh{\lambda})$, and thus the forms themselves, match.
  We continue by writing $\tr_{\partial F}(\DIFF\wh{\lambda})=\DIFF(\tr_{\partial F}\wh{\lambda})$, integrating by parts and using $\partial(\partial F)=0$ (in other words, the boundary terms coming from the integrations by parts on each $F'\in\FM{d'-1}(F)$ cancel out) to get
  \[
  \int_F\tr_F\wh{\xi}\wedge\nu=\int_{\partial F}\tr_{\partial F}\wh{\lambda}\wedge\tr_{\partial F}\DIFF\rho
  =\int_{\partial F}\tr_{\partial F}\wh{\lambda}\wedge\tr_{\partial F}\nu.
  \]
  Since $\DIFF\nu=\DIFF^2\rho=0$, the relation above proves that \eqref{eq:check.DOF.diff} holds when $\nu\in \DIFF\PL{s+k-d'+1}{d'-k-2}(F)$.

  If $\nu\in \KOSZUL_F \PL{s+k-d'-1}{d'-k}(F)\subset\KOSZUL_F \PL{s+k-d'}{d'-k}(F)$ then we can apply \eqref{eq:def.DOF.lambda} with $(\omega,\zeta)=(\nu,0)$ to immediately get \eqref{eq:check.DOF.diff}.

  This concludes the inductive construction of $\wh{\lambda}$ satisfying \eqref{eq:local.pb.average}.

  \medskip

  \emph{Step 3: estimates.}
  Since $\lambda=\wh{\lambda}-W_{\Sh(f)}^k\tilde{\lambda}$, \eqref{eq:local.pb.estimate} follows if we prove that both $\wh{\lambda}$ and $W_{\Sh(f)}^k\tilde{\lambda}$ satisfy this estimate.

  Let us start with $W_{\Sh(f)}^k\tilde{\lambda}$. By \cite[Lemma 19]{Di-Pietro.Droniou.ea:25} and the fact that $\card(\Sh(f))\lesssim 1$, we have
  \begin{align}
    \norm{f}{W_{\Sh(f)}^k \tilde{\lambda}}^2
    &\lesssim h_f^{d-2k} \sum_{F\in\FM{k}(\Sh(f))}\tilde{\lambda}_F^2\nonumber\\
    \overset{\eqref{eq:cochain.problem.estimates}}&\lesssim h_f^{d-2k}\sum_{F\in\FM{k+1}(\Sh(f))}\tilde{\xi}_F^2
    +h_f^{d-2k}\sum_{F\in\FM{k}(\Sh(\partial f))}\tilde{\theta}_F^2.
    \label{eq:local.pb.est.tildelambda}
  \end{align}
  Since $\tilde{\xi}=R_{\Sh(f)}^{k+1}\xi$, the definition \eqref{eq:de.rham.map} of the de Rham map together with a Cauchy--Schwarz inequality and the mesh regularity assumption yield $\tilde{\xi}_F^2\lesssim h_F^{k+1}\norm{F}{\tr_F\xi}^2$ for all $F\in\FM{k+1}(\Sh(f))$.
  Repeatedly using the discrete trace inequality of \cite[Lemma 18]{Di-Pietro.Droniou.ea:25} on the piecewise polynomial form $\xi$ (to go from $F$ of dimension $k+1$ to $f$ of dimension $d$) then leads to 
  \begin{equation}\label{eq:est.tilde.xi}
    \tilde{\xi}_F^2\lesssim h_f^{k+1-(d-(k+1))}\norm{f}{\xi}^2=h_f^{2k+2-d}\norm{f}{\xi}^2.
  \end{equation}
  The same arguments applied to $\theta$ give, for all $F\in\FM{k}(\Sh(\partial f))$,
  \begin{equation}\label{eq:est.tilde.theta}
    \tilde{\theta}_F^2
    \lesssim h_f^{k-(d-1-k)} \norm{\partial f}{\theta}^2
    = h_f^{2k-d+1}\norm{\partial f}{\theta}^2.
  \end{equation}
  Plugging \eqref{eq:est.tilde.xi} and \eqref{eq:est.tilde.theta} into \eqref{eq:local.pb.est.tildelambda} shows that $W_{\Sh(f)}^k\tilde{\lambda}$ satisfies \eqref{eq:local.pb.estimate}.

  We now turn to $\wh{\lambda}$. We first note that, by \cite[Lemma 19]{Di-Pietro.Droniou.ea:25}, \eqref{eq:est.tilde.xi} and \eqref{eq:est.tilde.theta} we have $\norm{f}{\wh{\xi}}\lesssim\norm{f}{\xi}$ and $\norm{\partial f}{\wh{\theta}}\lesssim\norm{\partial f}{\theta}$.
  Hence, it suffices to show that $\wh{\lambda}$ satisfies \eqref{eq:local.pb.estimate} with $\wh{\xi}$, $\wh{\theta}$ in the right-hand side to conclude. For this purpose, we first need to link the norm of $\wh{\lambda}$ with some measure involving only its DOFs. For each $d'\in\{k,\ldots,d\}$ and $F\in\FM{d'}(\Sh(f))$, we define the ``norm of the degrees of freedom of $\wh{\lambda}$ on $F$'' by
  \[
  N_{\mathrm{DOF}}(\wh{\lambda},F)\coloneq \sup_{\mu\in\PL{s+k-d'}{d'-k}(F)\backslash\{0\}}\frac{1}{\norm{F}{\mu}}\int_F \tr_F\wh{\lambda}\wedge\mu
  \]
  (this is also the $L^2$-norm of the $L^2$-projection of $\tr_F\wh{\lambda}$ on the full polynomial space of degree $s+k-d'$). An argument based on reference elements and the fact that $\Sh(f)$ is a regular simplicial mesh with $\lesssim 1$ simplices shows that
  \begin{equation}\label{eq:est.lambda.Ndof}
    \norm{f}{\wh{\lambda}}^2\simeq \sum_{d'=k}^d h_f^{d-d'}\sum_{F\in\FM{d'}(\Sh(f))}N_{\mathrm{DOF}}(\wh{\lambda},F)^2.
  \end{equation}
  Assume that we can prove that
  \begin{equation}\label{eq:est.Ndof}
    N_{\mathrm{DOF}}(\wh{\lambda},F)^2
    \lesssim h_f^{1+d'-d}\norm{\partial f}{\wh{\theta}}^2 + h_f^{2+d'-d}\norm{f}{\wh{\xi}}^2\qquad
    \forall d'\in\{k,\ldots,d\}\,,\ \forall F\in\FM{d'}(\Sh(f)).
  \end{equation}
  Plugging this bound into \eqref{eq:est.lambda.Ndof} would then show that $\wh{\lambda}$ satisfies \eqref{eq:local.pb.estimate} with $\wh{\xi}$, $\wh{\theta}$ in the right-hand side, which would conclude the proof.

  It remains to show that \eqref{eq:est.Ndof} holds, which we do by induction on $d'$.
  If $d'=k$, the base case in the inductive process used to define the DOFs of $\wh{\lambda}$ show that $N_{\mathrm{DOF}}(\wh{\lambda},F)=0$ if $F\not\subset\partial f$ while, if $F\subset\partial f$,
  \begin{equation}\label{eq:est.Ndof.base}
    N_{\mathrm{DOF}}(\wh{\lambda},F)^2=N_{\mathrm{DOF}}(\wh{\theta},F)^2\le \norm{F}{\tr_F\wh{\theta}}^2\lesssim h_f^{d'-(d-1)}\norm{\partial f}{\wh{\theta}}^2,
  \end{equation}
  where the first inequality follows from the Cauchy--Schwarz inequality and the second from repeated discrete trace inequalities. This proves that \eqref{eq:est.Ndof} holds for $F$.

  Take now $d'\ge k+1$ and assume that \eqref{eq:est.Ndof} holds for all simplices of dimension $<d'$. Let $F\in\FM{d'}(\Sh(f))$.
  If $F\subset\partial f$, the inductive step in the construction of $\wh{\lambda}$ and the same arguments as above show that \eqref{eq:est.Ndof} holds for $F$. If $F\not\subset\partial f$ then the degrees of freedom of $\wh{\lambda}$ on $F$ are given by \eqref{eq:def.DOF.lambda}. Let $\mu\in \PL{s+k-d'}{d'-k}(F)$ and write $\mu=\DIFF\omega+\zeta$ according to the isomorphism \eqref{eq:decomposition.PL}.
  By \cite[Lemmas 16 and 17]{Di-Pietro.Droniou.ea:25}, we have $\norm{F}{\omega}\lesssim h_F \norm{F}{\mu}$. Applying Cauchy--Schwarz inequalities and the discrete trace inequality \cite[Lemma 18]{Di-Pietro.Droniou.ea:25} to the right-hand side of \eqref{eq:def.DOF.lambda} then shows that
  \begin{align}
    N_{\mathrm{DOF}}(\wh{\lambda},F)^2\lesssim{}& h_F^2\norm{F}{\tr_F\wh{\xi}}^2+h_F\norm{\partial F}{\tr_{\partial F}\wh{\lambda}}^2\nonumber\\
    \lesssim{}& h_F^{2+d'-d}\norm{f}{\wh{\xi}}^2+h_F\norm{\partial F}{\tr_{\partial F}\wh{\lambda}}^2.
    \label{eq:est.Ndof.induction.2}\end{align}
All the degrees of freedom of $\tr_{\partial F}\wh{\lambda}$ are on simplices of dimension $<d'$, for which \eqref{eq:est.Ndof} holds. Let $F'\in\FM{d'-1}(\Sh(F))$ and apply the bound \eqref{eq:est.lambda.Ndof} with $(f,\wh{\lambda})\leftarrow (F',\tr_{F'}\wh{\lambda})$ to get
\begin{align*}
    \norm{F'}{\tr_{F'}\wh{\lambda}}^2&\simeq \sum_{d''=k}^{d'-1} h_f^{d'-1-d''}\sum_{F''\in\FM{d''}(\Sh(F'))}N_{\mathrm{DOF}}(\tr_{F'}\wh{\lambda},F'')^2\\
\overset{\eqref{eq:est.Ndof}}&\lesssim \sum_{d''=k}^{d'-1} h_f^{d'-1-d''}\sum_{F''\in\FM{d''}(\Sh(F'))} 
\left(h_f^{1+d''-d}\norm{\partial f}{\wh{\theta}}^2+h_f^{2+d''-d}\norm{f}{\wh{\xi}}^2
\right)\\
&\lesssim h_f^{d'-d}\norm{\partial f}{\wh{\theta}}^2 + h_f^{1+d'-d}\norm{f}{\wh{\xi}}^2.
\end{align*}
Summing these estimates over $F'\in\FM{d'-1}(\Sh(F))$ and plugging the resulting inequality into \eqref{eq:est.Ndof.induction.2} shows that $N_{\mathrm{DOF}}(\wh{\lambda},F)$ satisfies \eqref{eq:est.Ndof}.
\end{proof}

\begin{remark}[Existence of a chain homotopy and comparison to classical constructions]
  In the proof of Lemma \ref{lem:first.local.problem} we have in fact produced an explicit (graded) operator
  $$
  \mathsf{H}^k : \PLtrim{s}{k} \longrightarrow \PLtrim{s+1}{k-1}
  $$
  such that, for every closed polynomial form $\zeta \in \PLtrim{s}{k}$ (i.e., $\DIFF \zeta = 0$),
  $$
  \bigl(\mathrm{Id} - W_{\Sh(f)}^{k}\circ R_{\Sh(f)}^{k}\bigr)(\zeta)
  = \DIFF\,\mathsf{H}^k(\zeta).
  $$
  One can extend the definition of $\mathsf{H}^k$ to all polynomial forms, yielding the full chain-homotopy identity on $\PLtrim{s}{k}$:
  $$
  \mathrm{Id} - W_{\Sh(f)}^{k}\circ R_{\Sh(f)}^{k}
  = \DIFF\circ \mathsf{H}^k + \mathsf{H}^{k+1}\circ \DIFF.
  $$
  While the existence of such a homotopy operator can be traced back to the de Rham--Whitney--Dodziuk theory \cite{Whitney:57,Dodziuk:76} and its later refinements in the language of high-order Whitney forms (e.g.\ \cite{Hiptmair:02}), our contribution lies in giving a fully explicit construction within the Finite Element Exterior Calculus framework of \cite{Arnold:18}, specialised to the trimmed polynomial spaces.
  This explicitness is essential for deriving the stability estimates and for ensuring that the constructed form is in trimmed polynomial spaces.
\end{remark}

For each $d$-simplex $F \in \FM{d}(\Sh(f))$, define the following subspace of forms whose trace vanishes on $\partial F$:
\[
\PLtrimz{s}{k}(F) \coloneq \{\omega\in \PLtrim{s}{k}(F)\,:\,\tr_{\partial F}\omega=0\}.
\]

\begin{proposition}[Local problem in bubble finite element space]
  \label{prop:petrov.galerkin}
  Let $d\ge 1$, $f\in\FM{d}(\Mh)$ be a $d$-cell, and $F\in\FM{d}(\Sh(f))$ be a $d$-simplex in $f$.
  Let $s\ge 1$ be a polynomial degree and $k\in\{0,\ldots,d\}$ be a form degree.
  Given $\zeta \in L^2\Lambda^{k}(F)$, there exists $\tau \in \PLtrimz{s+d-k+1}{k-1}(F)$ such that
  \begin{equation}
    (-1)^{k} \int_F \tau\wedge \DIFF \nu 
    =\int_F \zeta\wedge \nu  \qquad \forall \nu \in \KOSZUL_f \PL{s-1}{d-k+1}(F)
    \label{eq:petrov.galerkin}
  \end{equation}
  and
  \begin{equation}\label{eq:bubble.estimate}
    \norm{F}{\tau}\lesssim h_f\norm{F}{\zeta}.
  \end{equation}
\end{proposition}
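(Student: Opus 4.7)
The plan is to establish existence of $\tau$ by a finite-dimensional linear-algebra argument on the simplex $F$, then to derive \eqref{eq:bubble.estimate} by scaling to a reference simplex. For existence, I would set $U\coloneq\PLtrimz{s+d-k+1}{k-1}(F)$ (trial) and $W\coloneq\KOSZUL_f\PL{s-1}{d-k+1}(F)$ (test), both finite-dimensional, and view \eqref{eq:petrov.galerkin} as a Petrov--Galerkin problem for the bilinear form $b(\tau,\nu)\coloneq(-1)^k\int_F\tau\wedge\DIFF\nu$ on $U\times W$. Solvability for arbitrary $\zeta\in L^2\Lambda^k(F)$ is equivalent to surjectivity of the induced map $B\colon U\to W^*$, $\tau\mapsto b(\tau,\cdot)$, which by rank--nullity in finite dimensions reduces to injectivity of the transpose: any $\nu\in W$ with $b(\tau,\nu)=0$ for all $\tau\in U$ must vanish.

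To prove this injectivity I would rewrite, via Stokes' theorem and the bubble trace $\tr_{\partial F}\tau=0$, the identity $b(\tau,\nu)=0$ as $\int_F\DIFF\tau\wedge\nu=0$, and then combine three ingredients: (i) the Koszul decomposition \eqref{eq:decomp.PL} applied to $\PL{s-1}{d-k+1}(F)$, which together with $\KOSZUL_f^2=0$ yields $W=\KOSZUL_f\DIFF\PL{s}{d-k}(F)$; (ii) the Koszul homotopy identity $(\DIFF\KOSZUL_f+\KOSZUL_f\DIFF)\omega_p=(d-k+1+p)\omega_p$ on homogeneous parts, which allows me to rewrite $\nu\in W$ as $(d-k+p)\alpha-\DIFF\KOSZUL_f\alpha$ for a suitable $\alpha$; and (iii) the polynomial relative Poincar\'e lemma on the contractible simplex $F$, ensuring that every closed polynomial $k$-form of degree $\le s+d-k$ with zero boundary trace lies in $\DIFF U$ (modulo the one-dimensional top-degree obstruction when $k=d$, which is compatible with the Koszul-image structure of $W$). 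Together, these show that $\DIFF U$ pairs non-degenerately with $W$ under the wedge-integral, forcing $\nu=0$ and hence existence of $\tau$.

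For the norm bound I would pull back via an affine bijection $\Phi\colon\hat F\to F$ from a reference simplex $\hat F$ of unit diameter; $U$, $W$ and $b$ transform covariantly under $\Phi^*$, and existence on $\hat F$ together with equivalence of norms on the finite-dimensional space $\hat U$ yields a uniform bound $\|\hat\tau\|_{\hat F}\lesssim\|\hat\zeta\|_{\hat F}$. Scaling back and invoking mesh regularity ($h_F\simeq h_f$), the standard scalings of $L^2$ norms of forms under $\Phi$ cancel on both sides except for the extra exterior derivative on the test side of $b$, which produces the single power of $h_f$ appearing in \eqref{eq:bubble.estimate}. The main obstacle is the injectivity step: threading the bubble condition on $U$ together with the Koszul-image structure of $W$ via the polynomial de Rham exact sequence and the Koszul homotopy identity is the technical heart of the argument; once that is settled, the scaling step is routine.
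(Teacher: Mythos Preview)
Your overall plan---Petrov--Galerkin solvability via injectivity of the transpose, then a reference-element scaling---is a legitimate alternative to the paper's argument, but both steps have gaps that the ingredients you list do not close.

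\medskip

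\textbf{Injectivity.} After integrating by parts, the condition $b(\tau,\nu)=0$ for all $\tau\in U$ reads $\int_F\tau\wedge\DIFF\nu=0$ for all $\tau\in U=\PLtrimz{s+d-k+1}{k-1}(F)$, with $\DIFF\nu\in\PL{s-1}{d-k+1}(F)$. The clean way to conclude $\DIFF\nu=0$ (hence $\nu=0$, since $\DIFF$ is injective on Koszul images) is to invoke the perfect pairing between $\PLtrimz{t+\ell+1}{d-\ell}(F)$ and $\PL{t}{\ell}(F)$ from \cite[Theorem~4.16]{Arnold.Falk.ea:06} or \cite[Corollary~3.3]{Berchenko-Kogan:21}. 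This is exactly what the paper cites, and it is not elementary. Your route via (iii)---the ``polynomial relative Poincar\'e lemma'' describing $\DIFF U$ as the closed bubble $k$-forms---is the exactness of the bubble trimmed complex, which is equivalent in difficulty to that duality; items (i)--(ii) only rewrite $W$ and do not reduce the work. So either you cite the same FEEC result, in which case the paper's direct use of it (defining a functional on the \emph{full} $\PL{s-1}{d-k+1}(F)$ via the decomposition $(\nu,\mu)\mapsto\DIFF\nu+\mu$ and reading off $\tau$ from the isomorphism) is shorter, or you must reprove bubble exactness, which your sketch does not do.

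\medskip

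\textbf{Scaling.} The test space $W=\KOSZUL_f\PL{s-1}{d-k+1}(F)$ depends on the Koszul centre $x_f\in f$, which is generally \emph{outside} $F$. Under your pullback $\Phi^*$ it becomes $\KOSZUL_{\hat x_f}\PL{s-1}{d-k+1}(\hat F)$ with $\hat x_f=\Phi^{-1}(x_f)$: this is not a fixed reference space, so the ``uniform bound on $\hat F$ by equivalence of norms'' is not automatic. Mesh regularity does keep $\hat x_f$ in a bounded set, so a continuity/compactness argument over that set could still yield uniformity, but this is a genuine extra step you do not mention. The paper sidesteps the issue entirely: the duality isomorphism is stated on the full, centre-independent space $\PL{s-1}{d-k+1}(F)$, so the reference-element bound \eqref{eq:resolution.bubble.pb.b} is uniform for free; the factor $h_f$ is then extracted from the explicit chain $\norm{F}{\nu}\le\norm{f}{\nu}\lesssim h_f\norm{f}{\DIFF\nu}\lesssim h_f\norm{f}{\DIFF\nu+\mu}\lesssim h_f\norm{F}{\DIFF\nu+\mu}$, using \cite[Lemmas~16 and 17]{Di-Pietro.Droniou.ea:25} and the ball inclusions $B(x_F,\varrho h_f)\subset F\subset f\subset B(x_F,h_f)$ from mesh regularity to pass between $F$ and $f$.
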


\begin{proof}
  Let $\ell\in\{0,\ldots,d\}$ be a form degree and $t\ge 0$ be a polynomial degree. We start with a preliminary remark. The isomorphisms of \cite[Theorem 4.16]{Arnold.Falk.ea:06} or \cite[Corollary 3.3]{Berchenko-Kogan:21} show that, for all non-zero $\alpha\in\PLtrimz{t+\ell+1}{d-\ell}(F)$, there exists $\beta\in \PL{t}{\ell}(F)$ such that $B(\alpha,\beta)\coloneq \int_F \alpha\wedge \beta \not= 0$ (\cite[Theorem 4.16]{Arnold.Falk.ea:06} does not cover the case $\ell=d$, but this case is trivial since an isomorphism between $\PLtrimz{t+d+1}{0}(F)=\mathring{\mathcal{P}}_{t+d+1}\Lambda^0(F)$ and $\PL{t}{0}(F)$ is readily obtained by applying the star-Hodge operator and multiplying/factorising by the bubble function on $F$, which is polynomial of degree $d+1$).
  This proves the injectivity of the mapping
  \[
  \PLtrimz{t+\ell+1}{d-\ell}(F)\mapsto (\PL{t}{\ell}(F))'\,,\quad \alpha\to B(\alpha,\cdot).
  \]
  Since the domain and co-domains have the same dimension (use again \cite[Corollary 3.3]{Berchenko-Kogan:21} or \cite[Theorem 4.16]{Arnold.Falk.ea:06}), we infer that this mapping is an isomorphism. As a consequence, for any $L\in (\PL{t}{\ell}(F))'$, there exists $\alpha\in \PLtrimz{t+\ell+1}{d-\ell}(F)$ such that
  \begin{subequations}\label{eq:resolution.bubble.pb}
    \begin{align}
      \label{eq:resolution.bubble.pb.a}
      \int_F \alpha\wedge\beta = L(\beta)\qquad\forall \beta\in\PL{t}{\ell}(F),\\
      \label{eq:resolution.bubble.pb.b}
      \norm{F}{\alpha}\lesssim \sup_{\beta\in\PL{t}{\ell}(F)\backslash\{0\}}\frac{L(\beta)}{\norm{F}{\beta}}.
    \end{align}
  \end{subequations}
  Notice that the bound \eqref{eq:resolution.bubble.pb.b} follows from a reference simplex argument (to get a hidden constant that does not depend on $F$). 

  Let us now turn to the proof of the proposition, observing first that we can assume $k\ge 1$ (if $k=0$, then $\tau=0$ satisfies \eqref{eq:petrov.galerkin} since $\PL{s-1}{d-k+1}(F)=\{0\}$). Since $d-k+1\ge 1$, the mapping
  \begin{equation*} 
    \mathcal X_F\coloneq\KOSZUL_f\PL{s-1}{d-k+1}(F)\times \KOSZUL_f\PL{s-2}{d-k+2}(F)\mapsto\PL{s-1}{d-k+1}(F)\,,\quad (\nu,\mu)\to \DIFF\nu+\mu
  \end{equation*}
  is an isomorphism \cite[Eq.~(2.9)]{Bonaldi.Di-Pietro.ea:24}.
  The use of $\KOSZUL_f$ is made possible by the freedom in the choice of the base point in the Koszul operator, which can even be outside $F$ in its affine space.
  Using this decomposition, we can define $L:\PL{s-1}{d-k+1}(F)\to\Real$ by
  \[
  L(\DIFF\nu+\mu)\coloneq (-1)^k \int_F \zeta\wedge\nu.
  \]
  Consider then the solution $\alpha=\tau\in \PLtrimz{s+d-k+1}{k-1}(F)$ to \eqref{eq:resolution.bubble.pb} with $\ell=d-k+1\in\{0,\ldots,d\}$, $t= s-1$ and $L$ defined above. 
  Applying \eqref{eq:resolution.bubble.pb.a} to $\beta=\DIFF\nu$ with $\nu\in \KOSZUL_f\PL{s-1}{d-k+1}(F)$ shows that $\tau$ solves \eqref{eq:petrov.galerkin}.

  We now establish \eqref{eq:bubble.estimate}. Take $(\nu,\mu)\in\mathcal X_F$ and canonically extend them to $f$.
  By \cite[Lemmas 16 and 17]{Di-Pietro.Droniou.ea:25}, we have
  \begin{equation}\label{eq:bound.above}
    \norm{F}{\nu}
    \le \norm{f}{\nu}
    \lesssim h_f \norm{f}{\DIFF\nu}
    \lesssim h_f \norm{f}{\DIFF\nu+\mu}.
  \end{equation}
  By mesh regularity assumption, there is $x_F\in F$ and $\varrho\simeq 1$ such that, denoting by $B(x_F,a)$ the ball centered at $x_F$ and of radius $a$, it holds $B(x_F,\varrho h_f)\subset F\subset f\subset B(x_F,h_f)$. Since $\DIFF\nu+\mu$ is a polynomial form on $f$, an argument similar to the one in the proof of \cite[Lemma 1.25]{Di-Pietro.Droniou:20} then gives $\norm{f}{\DIFF\nu+\mu}\lesssim \norm{F}{\DIFF\nu+\mu}$. Combined with \eqref{eq:bound.above}, this yields $\norm{F}{\nu}\lesssim  h_f\norm{F}{\DIFF\nu+\mu}$. This estimate and a Cauchy--Schwarz inequality show that the right-hand side of \eqref{eq:resolution.bubble.pb.b} is $\lesssim h_f \norm{F}{\zeta}$, which proves \eqref{eq:bubble.estimate}.
\end{proof}

\subsection{Definition of the conforming lifting}\label{sec:def.lifting}

Let $k \in \{0, \dots, \dtop\}$ and $r\ge 0$. We start by defining local liftings 
\[
\lift{k}{f}: \uH{k}{f}\to \PLtrim{r+1+d-k}{k}(\Sh(f))\subset H\Lambda^{k}(f)
\]
on each $f \in \FM{d}(\Mh)$, through a recursive construction on the cell dimension $d\in \{k, \dots, \dtop\}$.

\begin{itemize}[leftmargin=1em]
\item \emph{Base case $d = k$.}  For any $f\in\FM{k}(\Mh)$ we set
  \begin{equation}
    \lift{k}{f} \ul{\omega}_f \coloneqq P^{k}_{r,f}\ul{\omega}_{f}\overset{\eqref{eq:def.P.d=k}}=\omega_f\in\PL{r}{k}(f)\subset \PLtrim{r+1}{k}(\Sh(f)).
    \label{eq:lift.base}
  \end{equation}

\item \emph{Induction step $d \in \{k+1, \dots, \dtop\}$.}  Let $f\in\FM{d}(\Mh)$ and assume, by induction, that $\lift{k}{f'} \ul{\omega}_{f'}\in\PLtrim{r+d-k}{k}(\Sh(f'))$ has already been constructed on all cells $f'\in\FM{d-1}(f)$.  
  To define $\lift{k}{f} \ul{\omega}_f\in\PLtrim{r+1+d-k}{k}(\Sh(f))$, we consider two cases based on the value of $d$.

  \begin{enumerate}[label=\arabic*),leftmargin=1em]
  \item \emph{Case $d=k+1$}.
    We set
    \begin{equation}
      \lift{k}{f} \ul{\omega}_f \coloneqq \lambda + \sigma,
      \label{eq:lift.sum}
    \end{equation}
    where the forms $\lambda$ and $\sigma$ are constructed in this order, as follows:

    \emph{1a) Construction of $\lambda$}:  
    We define $\lambda \in \PLtrim{r+1+d-k}{k}(\Sh(f))$ as the solution with minimal $L^2(f)$-norm of the following system on $f$:
    \begin{subequations} \label{eq:lift}
      \begin{empheq}[left=\empheqlbrace]{align}
        \DIFF \lambda &= \DIFF^{k}_{r,f} \ul{\omega}_f \label{eq:lift.a} \\
        \tr_{\partial f} \lambda &= \lift{k}{\partial f} \ul{\omega}_{\partial f}, \label{eq:lift.b}
      \end{empheq}
    \end{subequations}
    where $\lift{k}{\partial f} \ul{\omega}_{\partial f}$ is such that
    $\restr{(\lift{k}{\partial f} \ul{\omega}_{\partial f})}{f'} \coloneqq \lift{k}{f'} \ul{\omega}_{f'}$ for all $f'\in\FM{d-1}(\partial f)$. 

    \emph{1b) Construction of $\sigma$}:  
    We take 
    \[
    \sigma \in \PLtrimz{r+1+d-k}{k}(\Sh(f))\coloneq \left\{ \omega \in H\Lambda^{k}(f) \,:\, \restr{\omega}{F} \in \PLtrimz{r+1+d-k}{k}(F)\quad\forall F \in \FM{d}(\Sh(f))\right\}
    \]
    of the form
    \begin{equation}\label{eq:def.sigma}
      \sigma = \DIFF \tau,
    \end{equation}
    where $\tau \in \PLtrimz{r+2+d-k}{k-1}(\Sh(f))$ is the solution with minimal $L^2(f)$-norm of: for all $F \in \FM{d}(\Sh(f))$, 
    \begin{equation}
      (-1)^{k} \int_F \tau\wedge \DIFF \nu_F 
      =\int_F (P^{k}_{r,f} \ul{\omega}_f - \lambda)\wedge \nu_F  \qquad \forall \nu_F \in \KOSZUL_f \PL{r}{d-k+1}(F).
      \label{eq:tau.existence}
    \end{equation}

  \item \emph{Case $d \ge k+2$.}
    We first define the \emph{lifting correction} $\liftc{k}{f}: \uH{k}{f}\to  H\Lambda^{k+1}(f)$ as 
    \begin{equation}
      \liftc{k}{f} \ul{\omega}_f \coloneqq \chi + \psi\in \PLtrim{r+d-k}{k+1}(\Sh(f)),
      \label{eq:lift.correction.sum}
    \end{equation}
    where the terms $\chi$ and $\psi$ are constructed in this order, as follows:

    \emph{2a) Construction of $\chi$}:
    We take $\chi \in \PLtrim{r+d-k-1}{k+1}(\Sh(f))$ the solution with minimal $L^2(f)$-norm of the following system on $f$:
    \begin{subequations} \label{eq:lift.correction}
      \begin{empheq}[left=\empheqlbrace]{align}
        \DIFF \chi &= -\DIFF(\DIFF^{k}_{r,f} \ul{\omega}_f),  \label{eq:liftc.correction.a} \\
        \tr_{\partial f} \chi &= \DIFF \lift{k}{\partial f} \ul{\omega}_{\partial f} - \tr_{\partial f} \DIFF^{k}_{r,f} \ul{\omega}_f, \label{eq:lift.correction.b}
      \end{empheq}
    \end{subequations}
    where $\DIFF \lift{k}{\partial f} \ul{\omega}_{\partial f}$ is the broken exterior derivative such that
    $\restr{(\DIFF \lift{k}{\partial f} \ul{\omega}_{\partial f})}{f'} \coloneqq \DIFF \lift{k}{f'} \ul{\omega}_{f'}$ for all $f'\in\FM{d-1}(\partial f)$.

    \emph{2b) Construction of $\psi$}:  
    We take $\psi \in \PLtrimz{r+d-k}{k+1}(\Sh(f))$ of the form
    \begin{equation}
      \psi = \DIFF \rho,
      \label{eq:def.psi}
    \end{equation}
    where $\rho \in \PLtrimz{r+1+d-k}{k}(\Sh(f))$ is the solution with minimal $L^2(f)$-norm of: for all $F \in \FM{d}(\Sh(f))$,
    \begin{equation}
      (-1)^{k+1} \int_F \rho\wedge \DIFF \mu_F 
      =-\int_F  \chi\wedge \mu_F  \qquad \forall \mu_F \in \KOSZUL_f \PL{r}{d-k}(F).
      \label{eq:rho.existence}
    \end{equation}

    Then, to define $\lift{k}{f} \ul{\omega}_f\in\PLtrim{r+1+d-k}{k}(f)$, we perform exactly the same two–step construction as in the case $d=k+1$ (Steps 1a--1b), except that $\lambda\in\PLtrim{r+1+d-k}{k}(\Sh(f))$ is taken as the solution with minimal $L^2(f)$-norm of
    \begin{subequations} \label{eq:lift.d>=k+2}
      \begin{empheq}[left=\empheqlbrace]{align}
        \DIFF \lambda &=  \DIFF^{k}_{r,f} \ul{\omega}_f + \liftc{k}{f} \ul{\omega}_f \label{eq:lift.d>=k+2.a} \\
        \tr_{\partial f} \lambda &= \lift{k}{\partial f} \ul{\omega}_{\partial f}. \label{eq:lift.d>=k+2.b}
      \end{empheq}
    \end{subequations}
    The construction of $\sigma$ then remains the same as in Step 1b.
  \end{enumerate}

\end{itemize}

The global lifting $\lift{k}{h}:\uH{k}{h}\to H\Lambda^{k}(\Omega)$ is then defined by setting
\begin{equation}\label{eq:def.lifting}
  (\lift{k}{h}\ul{\omega}_h)|_f \coloneqq
  \lift{k}{f}\ul{\omega}_f\qquad\forall\ul{\omega}_h\in\uH{k}{h}\,,\quad\forall f\in\FM{\dtop}(\Mh).
\end{equation}

\begin{lemma}[Well-posedness of the lifting construction]
  Each of the problems \eqref{eq:lift}, \eqref{eq:tau.existence}, \eqref{eq:lift.correction}, and \eqref{eq:rho.existence} admits at least one solution,
  and the equation \eqref{eq:def.lifting} defines a linear lifting  $\lift{k}{h}:\uH{k}{h}\to H\Lambda^{k}(\Omega)$.
\end{lemma}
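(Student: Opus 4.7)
The plan is to proceed by induction on the cell dimension $d \in \{k, \ldots, \dtop\}$, simultaneously establishing on each $f \in \FM{d}(\Mh)$ (i) existence of solutions to the four local problems, (ii) linearity of $\lift{k}{f}$ in $\ul{\omega}_f$, and (iii) the trace-matching identity $\tr_{f''}\lift{k}{f'}\ul{\omega}_{f'} = \lift{k}{f''}\ul{\omega}_{f''}$ for every $f'' \subset f'$, which is what ensures conformity at the next inductive level. The base case $d = k$ is immediate from~\eqref{eq:lift.base}. The inductive hypothesis guarantees that $\lift{k}{\partial f}\ul{\omega}_{\partial f}$ is a well-defined element of $\PLtrim{r+d-k}{k}(\Sh(\partial f))$, conforming on $\partial f$ and linear in $\ul{\omega}_{\partial f}$, and that the broken exterior derivative $\DIFF \lift{k}{\partial f}\ul{\omega}_{\partial f}$ is itself conforming (by commutation of $\DIFF$ with $\tr_{f''}$ within each face).

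The three elliptic-type systems~\eqref{eq:lift}, \eqref{eq:lift.d>=k+2} and~\eqref{eq:lift.correction} are all handled via Lemma~\ref{lem:first.local.problem}. In each case the main checks are $\DIFF$-closedness of the right-hand side and the compatibility condition~\eqref{eq:local.pb.cond}. For~\eqref{eq:lift} with $d = k+1$, closedness is trivial since $\DIFF^k_{r,f}\ul{\omega}_f$ is a top-degree form; condition~\eqref{eq:condition.d=k+1} follows from~\eqref{eq:def.d} applied with $\mu = 1$ together with the base-case identity $\lift{k}{f'}\ul{\omega}_{f'} = P^k_{r,f'}\ul{\omega}_{f'}$ on each $f' \in \FM{k}(f)$. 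For~\eqref{eq:lift.d>=k+2}, closedness of $\DIFF^k_{r,f}\ul{\omega}_f + \liftc{k}{f}\ul{\omega}_f$ follows from~\eqref{eq:liftc.correction.a} combined with $\DIFF\psi = \DIFF^2\rho = 0$, while the trace condition is precisely what~\eqref{eq:lift.correction.b} encodes, upon noticing that $\tr_{\partial f}\psi = 0$ by the zero-trace definition of $\PLtrimz{r+d-k}{k+1}(\Sh(f))$. For~\eqref{eq:lift.correction}, closedness is automatic from $\DIFF^2 = 0$; the compatibility follows from Stokes' formula when $d = k+2$ and from commutation of $\DIFF$ with $\tr$ when $d \ge k+3$, and in both subcases it invokes the inductive conformity of $\lift{k}{\partial f}\ul{\omega}_{\partial f}$ and of its exterior derivative on $\partial f$ to guarantee, respectively, that $\int_{\partial f}\DIFF\lift{k}{\partial f}\ul{\omega}_{\partial f} = 0$ and $\DIFF(\DIFF\lift{k}{\partial f}\ul{\omega}_{\partial f}) = 0$. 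The two Petrov--Galerkin problems~\eqref{eq:tau.existence} and~\eqref{eq:rho.existence} are direct instances of Proposition~\ref{prop:petrov.galerkin} applied on each $F \in \FM{d}(\Sh(f))$, with data $\zeta \coloneq P^k_{r,f}\ul{\omega}_f - \lambda$ and $\zeta \coloneq -\chi$ respectively.

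Once existence is obtained, the minimum-$L^2$-norm selection is uniquely defined in each finite-dimensional polynomial space as the orthogonal projection of $0$ onto the non-empty affine solution set, and is itself a linear function of the data; combined with the inductive linearity of $\lift{k}{\partial f}$, this propagates linearity to $\lift{k}{f}$. Global conformity $\lift{k}{h}\ul{\omega}_h \in H\Lambda^k(\Omega)$ then follows from the trace conditions~\eqref{eq:lift.b} and~\eqref{eq:lift.d>=k+2.b}: on any interior $(\dtop-1)$-cell $f'$, both one-sided traces of $\lift{k}{h}\ul{\omega}_h$ equal $\lift{k}{f'}\ul{\omega}_{f'}$, a quantity depending only on $\ul{\omega}_{f'}$ and therefore single-valued.

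The main obstacle is the compatibility verification for the lifting-correction system~\eqref{eq:lift.correction}, where one must reconcile the broken object $\DIFF\lift{k}{\partial f}\ul{\omega}_{\partial f}$ on $\partial f$ with the differential structure of the right-hand side on $f$. This is the only step where the hierarchical nature of the construction genuinely enters, as it relies on cancelling boundary contributions from $(d-2)$-cells through the inductive trace-matching identity; once this book-keeping is carried out, the remaining existence, linearity, and conformity statements follow from the constituent lemmas and routine linear-algebraic arguments.
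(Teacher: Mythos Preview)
Your proposal is correct and follows essentially the same approach as the paper: both arguments handle the three boundary-value systems via Lemma~\ref{lem:first.local.problem} (verifying closedness and the compatibility conditions \eqref{eq:local.pb.cond} exactly as you describe), the two Petrov--Galerkin problems via Proposition~\ref{prop:petrov.galerkin}, derive linearity from the minimum-norm selection, and conclude conformity from the trace conditions. Your explicit inclusion of the trace-matching identity in the inductive hypothesis is slightly more careful than the paper's presentation (which leaves this implicit and only remarks on it afterwards), but the substance is identical.
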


\begin{proof}
  We first prove that each problem in the recursive construction has a solution in the considered space.

  \smallskip\noindent
  \emph{i) Construction of $\lambda$, case $d=k+1$}.
  Applying the definition \eqref{eq:def.d} with $\mu=1\in\PL{r}{0}(f)$ yields
  \[
  \int_f \DIFF^k_{r,f}\ul{\omega}_f = \int_{\partial f} P^k_{r,\pf} \ul{\omega}_f\overset{\eqref{eq:lift.base}}=\int_{\partial f} \lift{k}{\pf} \ul{\omega}_{\pf}.
  \]
  Hence, $(\xi,\theta)=(\DIFF^k_{r,f}\ul{\omega}_f,\lift{k}{\pf} \ul{\omega}_{\pf})$ satisfy the compatibility condition \eqref{eq:condition.d=k+1} and the existence of a solution $\lambda\in\PLtrim{r+1+d-k}{k}(\Sh(f))$ to \eqref{eq:lift} is a straightforward consequence of Lemma \ref{lem:first.local.problem}
  (with $s=r+1$) after noticing that $\DIFF^{k}_{r,f} \ul{\omega}_f\in\PL{r}{k+1}(f)\subset\PLtrim{r+1}{k+1}(\Sh(f))$
  and $\lift{k}{\partial f} \ul{\omega}_{\partial f}\in \PLtrim{r+1}{k}(\Sh(\partial f))$ by \eqref{eq:lift.base}.

  \smallskip\noindent
  \emph{ii) Construction of $\sigma$}.
  By Proposition \ref{prop:petrov.galerkin} with $s=r+1$, for each $F\in\FM{d}(\Sh(f))$ the Petrov--Galerkin problem \eqref{eq:tau.existence} admits a solution in $\PLtrimz{r+2+d-k}{k-1}(F)$. Since the traces of these solutions vanish on $\partial F$, we can glue them together to obtain $\tau\in\PLtrimz{r+2+d-k}{k}(\Sh(f))$. The exterior derivative commutes with the trace and preserves trimmed spaces, so $\sigma=\DIFF\tau$ belongs to $\PLtrimz{r+1+d-k}{k}(\Sh(f))$.

  \smallskip\noindent
  \emph{iii) Construction of $\chi$}.
  This construction is only relevant for $d\ge k+2$, and thus 
  \[
  \DIFF(\DIFF^k_{r,f}\ul{\omega}_f)\in\PL{r-1}{k+2}(f)\subset \PLtrim{r+d-k-1}{k+2}(\Sh(f))
  \]
  and, by induction hypothesis on $\lift{k}{\partial f}$,
  \[
  \DIFF \lift{k}{\partial f} \ul{\omega}_{\partial f} - \tr_{\partial f} \DIFF^{k}_{r,f} \ul{\omega}_f\in\PLtrim{r+d-1-k}{k+1}(\Sh(\partial f))+
  \PLtrim{r}{k+1}(\Sh(\partial f))\subset \PLtrim{r+d-k-1}{k+1}(\Sh(\partial f)).
  \]
  The existence of a solution $\chi\in \PLtrim{r+d-k}{k+1}(\Sh(f))$ to \eqref{eq:lift.correction} is then obtained by applying Lemma \ref{lem:first.local.problem} (with $s=r+d-k-1\ge 1$ and $k\leftarrow k+1$), provided we check the compatibility condition  \eqref{eq:local.pb.cond}.
  If $d=k+2=(k+1)+1$ then we need to check \eqref{eq:condition.d=k+1}, which amounts to
  \[
  \int_f -\DIFF(\DIFF^k_{r,f}\ul{\omega}_f)= \int_{\pf}\left(\DIFF \lift{k}{\partial f} \ul{\omega}_{\partial f} - \tr_{\partial f} \DIFF^{k}_{r,f} \ul{\omega}_f\right).
  \] 
This relation is a straightforward consequence of the Stokes formula on $f$ for $\DIFF(\DIFF^k_{r,f}\ul{\omega}_f)$ and on $\pf$ for $\DIFF \lift{k}{\partial f} \ul{\omega}_{\partial f}$ (noticing that $\partial\pf=\emptyset$).
If $d>k+2=(k+1)+1$, we need to verify \eqref{eq:condition.d>=k+2}, which directly follows from the commutation of exterior derivative and trace:
  \[
  \tr_{\partial f}(-\DIFF(\DIFF^k_{r,f}\ul{\omega}_f)) = -\DIFF (\tr_{\partial f}\DIFF^k_{r,f}\ul{\omega}_f)
  \overset{\DIFF^2=0}=\DIFF\left(\DIFF \lift{k}{\partial f} \ul{\omega}_{\partial f} - \tr_{\partial f} \DIFF^{k}_{r,f} \ul{\omega}_f\right).
  \]
  \smallskip\noindent
  \emph{iv) Construction of $\psi$}.
  The argument is the same as for $\sigma$, except that we apply Proposition \ref{prop:petrov.galerkin} with $s=r+1$ and $k\leftarrow k+1$, since we look for a $k$-form $\rho$ and \eqref{eq:rho.existence} is required to hold for all $\nu_F\in\PLtrim{r}{d-(k+1)+1}(F)$. As a consequence, $\rho\in\PLtrimz{r+1+d-k}{k}(\Sh(f))$ and $\psi=\DIFF\rho\in\PLtrim{r+d-k}{k+1}(\Sh(f))$.

  \smallskip\noindent
  \emph{v) Construction of $\lambda$, case $d\ge k+2$}. 
  We still aim at applying Lemma \ref{lem:first.local.problem}. The problem \eqref{eq:lift.d>=k+2} defining $\lambda$ has the form \eqref{eq:local.pb} with data
  \[
  \xi\coloneq \DIFF^{k}_{r,f} \ul{\omega}_f + \liftc{k}{f} \ul{\omega}_f\in\PLtrim{r+d-k}{k+1}(\Sh(f))\quad\text{ and }\quad
  \theta\coloneq \lift{k}{\partial f} \ul{\omega}_{\partial f}\in\PLtrim{r+d-k}{k}(\Sh(\pf)),
  \]
  so the existence of a solution $\lambda\in \PLtrim{r+1+d-k}{k}(\Sh(f))$ to \eqref{eq:lift.d>=k+2} is ensured if we check the compatibility condition \eqref{eq:condition.d>=k+2}. Since $\tr_{\pf}\psi=0$ we have 
  \[
  \tr_{\pf}\liftc{k}{f}\ul{\omega}_f=\tr_{\pf}\chi\overset{\eqref{eq:lift.correction.b}}=\DIFF \lift{k}{\partial f} \ul{\omega}_{\partial f} - \tr_{\partial f} \DIFF^{k}_{r,f} \ul{\omega}_f.
  \]
  Hence, $\tr_{\pf}\xi=\DIFF \lift{k}{\partial f} \ul{\omega}_{\partial f}=\DIFF\theta$ as required.

  \medskip

  This proves that all local problems involved in the construction of $\lift{k}{h}$ have at least one solution. Choosing the solutions with minimal $L^2(f)$-norms additionally enforces an orthogonality condition with respect to the kernel of these local problems, and ensures that each of these (extended) problems is uniquely solvable. Hence, the selected solutions are linear with respect to the data, and thus with respect to $\ul{\omega}_h$. This proves that $\lift{k}{h}$ itself is linear.

  For all $f\in\FM{\dtop}(\Mh)$, we have $\tr_{\pf}\sigma=0$ and thus the equations \eqref{eq:lift.b} and \eqref{eq:lift.d>=k+2.b} ensure that, on each $f'\in\FM{d-1}(\Mh)$, we have $\tr_{f'}\lift{k}{f}\ul{\omega}_f=\lift{k}{f'}\ul{\omega}_{f'}$. This quantity only depends on $f'$, not on $f$, and thus if $f'$ is an interface between two $d$-cells $f_1$ and $f_2$ we have $\tr_{f'}\lift{k}{f_1}\ul{\omega}_{f_1}=\tr_{f'}\lift{k}{f_2}\ul{\omega}_{f_2}$. Since each local lifting belongs to $H\Lambda^k$ on its cell, and since the traces of local liftings match between adjacent $d$-cells, this proves that $\lift{k}{h}\ul{\omega}_h\in H\Lambda^k(\Omega)$. \end{proof}

\begin{remark}[Structure of the lifting]\label{rem:conclusion.proof}
Extending the argument applied in the conclusion of the proof above, we see that $\tr_f(\lift{k}{h}\ul{\omega}_h)=\lift{k}{f}\ul{\omega}_f$ for any cell $f$ of any dimension $d\ge k$, thus showing the single-valuedness and polynomial characteristic of the traces of the lifting, as stated in Remark \ref{rem:piecewise.lifting}.

If $\ul{\omega}_f=0$ for some $f\in\FM{d}(\Mh)$, then the linearity of the local lifting shows that $\lift{k}{f}\ul{\omega}_f=0$, and thus that $\tr_f(\lift{k}{h}\ul{\omega}_h)=0$ as claimed in Remark \ref{rem:homogeneous.BC}.
\end{remark}

\subsection{Properties of the lifting}\label{sec:properties.lifting}

In this section we prove that the lifting satisfies the projection relation \eqref{eq:lift.proj} and the estimates \eqref{eq:lift.bound.0}--\eqref{eq:lift.bound.1}.

\medskip

\underline{Proof of \eqref{eq:lift.proj}.}

As observed in Remark \ref{rem:conclusion.proof}, for all $d\in\{k,\ldots,\dtop\}$ and all $f\in\FM{d}(\Mh)$ we have $\tr_f(\lift{k}{h}\ul{\omega}_h)=\lift{k}{f}\ul{\omega}_f$. Hence, \eqref{eq:lift.proj} is equivalent to the same relation with the trace replaced by the lifting on the cell $f$.

The proof of this relation is done by induction on the dimension $d$ of $f$.
In the case $d=k$, the definition of $\lift{k}{f} \ul{\omega}_f$ in \eqref{eq:lift.base} immediately yields \eqref{eq:lift.proj}.
We now take $d \in \{k+1, \dots, \dtop\}$, and assume that \eqref{eq:lift.proj} holds on every $(d-1)$-cell.
Let us first recall the following decomposition
\[
\PLtrim{r+1}{d-k}(f) = \DIFF \KOSZUL_f\PL{r}{d-k}(f) \oplus \KOSZUL_f \PL{r}{d-k+1}(f),
\]
which follows from \eqref{eq:def.Ptrim} and \eqref{eq:decomp.PL} (with $r\leftarrow r+1$ and $k\leftarrow d-k\ge 1$), and $\DIFF^2=0$.
Owing to this decomposition, we only need to separately prove \eqref{eq:lift.proj} for $\zeta\in\DIFF\KOSZUL_f\PL{r}{d-k}(f)$ (which we split in two cases) and for $\zeta\in \KOSZUL_f\PL{r}{d-k+1}(f)$.

\smallskip
\emph{i) Case $\zeta\in \DIFF\KOSZUL_f\PL{r}{d-k}(f)$ and $d=k+1$}.
We write $\zeta=\DIFF\mu$ with $\mu\in\KOSZUL_f\PL{r}{d-k}(f)$.
By \eqref{eq:lift.sum}, the fact that $\DIFF \sigma \overset{\eqref{eq:def.sigma}}= \DIFF^2\tau=0$, and \eqref{eq:lift.a}, we have
$\DIFF \lift{k}{f} \ul{\omega}_f = \DIFF^{k}_{r,f} \ul{\omega}_f$. Hence, testing \eqref{eq:def.pot} against $(\mu,0)$, we find
\begin{align}
  \label{eq:passage}
  \begin{split}
    (-1)^{k+1} \int_f  P^{k}_{r,f} \ul{\omega}_f \wedge \DIFF \mu 
    &= \int_f \DIFF^{k}_{r,f} \ul{\omega}_f\wedge \mu  - \int_\pf P^{k}_{r,\pf} \ul{\omega}_\pf\wedge \tr_\pf \mu \\
    &=
    \int_f \DIFF \lift{k}{f} \ul{\omega}_f \wedge \mu 
    -\int_\pf \tr_{\partial f} \lift{k}{f} \ul{\omega}_f \wedge \tr_\pf \mu\\
    \overset{\text{Stokes}}&= (-1)^{k+1} \int_f \lift{k}{f} \ul{\omega}_f\wedge \DIFF \mu,
  \end{split}
\end{align}
where, to handle the boundary term in the second equality, we have used the induction hypothesis on each $f' \in \FM{d-1}(\partial f)$, after noticing that $\tr_{f'}\lift{k}{f}\ul{\omega}_f=\lift{k}{f'}\ul{\omega}_{f'}$ and $\tr_{f'} \mu \in \PLtrim{r+1}{(d-1)-k}(f')$ (since $\mu\in\PLtrim{r+1}{d-k-1}(f)$).

\smallskip
\emph{ii) Case $\zeta\in \DIFF\KOSZUL_f\PL{r}{d-k}(f)$ and $d\ge k+2$}.
We still write $\zeta=\DIFF\mu$ with $\mu\in\KOSZUL_f\PL{r}{d-k}(f)$, and consider first the correction term $\liftc{k}{f} \ul{\omega}_f$.
By \eqref{eq:lift.correction.sum} and \eqref{eq:def.psi} we have $\liftc{k}{f} \ul{\omega}_f=\chi+\DIFF\rho$. For any simplex $F\in\FM{d}(\Sh(f))$
we have $\tr_{\partial F}\rho=0$ since $\rho\in\PLtrimz{r+1+d-k}{k}(\Sh(f))$, and we can therefore use the Stokes formula to write
\begin{align*}
  \int_F \liftc{k}{f} \ul{\omega}_f \wedge \mu 
  &= \int_F \chi \wedge \mu  + \int_F \DIFF\rho \wedge \mu \\
  &= \int_F \chi \wedge \mu + (-1)^{k+1} \int_F \rho \wedge \DIFF \mu \\
  \overset{\eqref{eq:rho.existence}}&= \int_F\chi\wedge \mu -\int_F  \chi\wedge \mu = 0.
\end{align*}
Notice that, in the last equality, we have used $\mu|_F\in\KOSZUL_f\PL{r}{d-k}(F)$ (this is why \eqref{eq:rho.existence} had to be considered with the Koszul operator on $f$, not on $F$). Summing these relations over $F\in\FM{d}(\Sh(f))$ and recalling that $\DIFF \lift{k}{f} \ul{\omega}_f=\DIFF\lambda\overset{\eqref{eq:lift.d>=k+2.a}}=\DIFF^{k}_{r,f} \ul{\omega}_f+\liftc{k}{f} \ul{\omega}_f $, we infer that
\[
\int_f \DIFF \lift{k}{f} \ul{\omega}_f \wedge \mu = \int_f \DIFF^{k}_{r,f} \ul{\omega}_f\wedge \mu.
\]
The second equality in \eqref{eq:passage} is therefore still valid, and the argument used in Case (i) above can be applied to conclude.

\smallskip
\emph{iii) Case $\zeta\in \KOSZUL_f\PL{r}{d-k+1}(f)$}. Take $\lambda\in \PLtrim{r+1+d-k}{k}(\Sh(f))$ and $\tau\in\PLtrimz{r+2+d-k}{k-1}(\Sh(f))$ such that $\lift{k}{f} \ul{\omega}_f=\lambda+\DIFF\tau$ and write
\[
\int_f P^{k}_{r,f} \ul{\omega}_f\wedge \zeta 
= \int_f \lambda\wedge \zeta + \int_f (P^{k}_{r,f} \ul{\omega}_f - \lambda)\wedge \zeta 
\overset{\eqref{eq:tau.existence}}= \int_f \lambda\wedge \zeta 
+ (-1)^{k} \int_f \tau\wedge \DIFF \zeta,
\]
where the use of \eqref{eq:tau.existence} is justified since $\zeta|_F\in\KOSZUL_f\PL{r}{d-k+1}(F)$ for all $F\in\FM{d}(\Sh(f))$. 
Since $\tr_{\partial f}\tau=0$ we can apply the Stokes formula to conclude the proof of \eqref{eq:lift.proj}:
\[
\int_f P^{k}_{r,f} \ul{\omega}_f\wedge \zeta 
= \int_f \lambda\wedge \zeta  + \int_f  \DIFF \tau\wedge \zeta 
= \int_f \lift{k}{f} \ul{\omega}_f\wedge \zeta.
\]

\medskip

\underline{Proof of \eqref{eq:dlift.proj}.}

Assume that $f\in\FM{d}(\Mh)$ with $d\ge k+1$ and let $\mu\in\PLtrim{r+1}{d-k-1}(f)$. The Stokes formula gives
\begin{equation}\label{eq:relabove}
   \int_f \DIFF\tr_f(\lift{k}{h} \ul{\omega}_h) \wedge \mu=(-1)^{k+1}\int_f \tr_f(\lift{k}{h} \ul{\omega}_h)\wedge\DIFF\mu + \int_{\pf}\tr_{\pf}(\lift{k}{h} \ul{\omega}_h)\wedge \tr_{\pf}\mu.
\end{equation}
We note that $\DIFF\mu\in\PL{r}{d-k}(f)\subset\PLtrim{r+1}{d-k}(f)$ so, by \eqref{eq:lift.proj}, we can substitute $P^k_{r,f}\ul{\omega}_f$ for $\tr_f(\lift{k}{h} \ul{\omega}_h)$ in the right-hand side of \eqref{eq:relabove}.
By \cite[Lemma 4]{Bonaldi.Di-Pietro.ea:24}, we also have $\tr_{f'}\mu\in\PLtrim{r+1}{d-k-1}(f')$ for all $f'\in\FM{d-1}(f)$, and we can thus apply \eqref{eq:lift.proj} to each $f'\in\FM{d-1}(f)$ instead of $f$ in order to also substitute $P^k_{r,\pf} \ul{\omega}_{\pf}$ for $\tr_{\pf}(\lift{k}{h} \ul{\omega}_h)$ in \eqref{eq:relabove}. This gives
\[
   \int_f \DIFF\tr_f(\lift{k}{h} \ul{\omega}_h) \wedge \mu=(-1)^{k+1}\int_f P^k_{r,f}\ul{\omega}_f \wedge\DIFF\mu + \int_{\pf}P^k_{r,\pf} \ul{\omega}_{\pf}\wedge \tr_{\pf}\mu.
\] 
Invoking then \eqref{eq:ipp.pot} concludes the proof of \eqref{eq:dlift.proj}.

\medskip

\underline{Proof of \eqref{eq:lift.bound.0}--\eqref{eq:lift.bound.1}.}

We first recall the following estimates from \cite[Lemma 8]{ Di-Pietro.Droniou.ea:24}, valid for all $f\in\FM{d}(\Mh)$ (with $d\ge k$ for the first one and $d\ge k+1$ for the second) and all $\ul{\omega}_f\in\uH{k}{f}$ :
\begin{gather}\label{eq:bound.pot}
  \norm{f}{P^k_{r,f}\ul{\omega}_f}\lesssim \opn{f}{\ul{\omega}_f},\\
  \label{eq:bound.domega}
  \norm{f}{\DIFF^k_{r,f}\ul{\omega}_f}\le \opn{f}{\ul{\DIFF}^k_{r,f}\ul{\omega}_f}\lesssim h_f^{-1}\opn{f}{\ul{\omega}_f}.
\end{gather}
Since, in the construction of $\lift{k}{f}\ul{\omega}_f$, we have selected minimal-norm solutions to local problems, we also note that the estimates from Lemma \ref{lem:first.local.problem} and Proposition \ref{prop:petrov.galerkin} are applicable to these solutions. We will now establish the bounds \eqref{eq:lift.bound.0}--\eqref{eq:lift.bound.1} in a recursive way on the dimension $d$ of $f$, following the construction of the lifting.

\smallskip
\emph{i) Case $d=k$}. The definition \eqref{eq:lift.base} of the lifting together with \eqref{eq:bound.pot} directly gives \eqref{eq:lift.bound.0}.

\smallskip
\emph{ii) Case $d=k+1$}. Recall that $\tr_f(\lift{k}{h}\ul{\omega}_h)=\lift{k}{f}\ul{\omega}_f=\lambda+\sigma$ is given by \eqref{eq:lift.sum}.
As $\lambda$ solves \eqref{eq:lift}, Lemma \ref{lem:first.local.problem} shows that
\[
\norm{f}{\lambda}^2
\lesssim h_f \norm{\pf}{\lift{k}{\pf}\ul{\omega}_{\pf}}^2
+ h_f^2\norm{f}{\DIFF^k_{r,f}\ul{\omega}_f}^2.
\]
Invoking the bound on $\lift{k}{f'}\ul{\omega}_{f'}$, for $f'\in\FM{d-1}(f)$, coming from the induction assumption and \eqref{eq:bound.domega} gives
\begin{equation}\label{eq:estim.lift.lambda}
  \norm{f}{\lambda}^2
  \lesssim h_f \sum_{f'\in\FM{d-1}(f)}\opn{f'}{\ul{\omega}_{f'}}^2 + \opn{f}{\ul{\omega}_f}^2
  \overset{\eqref{eq:opn.f.k+}}\le \opn{f}{\ul{\omega}_f}^2.
\end{equation}
Regarding $\sigma=\DIFF\tau$ with $\tau$ satisfying \eqref{eq:tau.existence}, the inverse inequality of \cite[Lemma 15]{Di-Pietro.Droniou.ea:25} applied in each $F\in\FM{d}(\Sh(f))$ and Proposition \ref{prop:petrov.galerkin} yield
\[
\norm{f}{\sigma}^2
\lesssim h_f^{-2} \norm{f}{\tau}^2
\lesssim \norm{f}{P^k_{r,f}\ul{\omega}_f-\lambda}^2.
\]
Invoking \eqref{eq:bound.pot} and \eqref{eq:estim.lift.lambda} then show that 
\begin{equation}\label{eq:est.lift.sigma}
  \norm{f}{\sigma}^2\lesssim \opn{f}{\ul{\omega}_f}^2,
\end{equation}
which, combined with \eqref{eq:estim.lift.lambda}, proves \eqref{eq:lift.bound.0}.

The bound \eqref{eq:lift.bound.1} directly follows from $\DIFF\lift{k}{f}\ul{\omega}_f=\DIFF(\lambda+\DIFF\tau)=\DIFF\lambda=\DIFF^k_{r,f}\ul{\omega}_f$ and from \eqref{eq:bound.domega}.

\smallskip
\emph{iii) Case $d\ge k+2$}. We start by estimating the correction $\liftc{k}{f}\ul{\omega}_f=\chi+\psi$ defined by \eqref{eq:lift.correction.sum}.
Since $\chi$ solves \eqref{eq:lift.correction}, Lemma \ref{lem:first.local.problem} gives
\begin{align}
  \norm{f}{\chi}^2
  &\lesssim h_f\norm{\pf}{\DIFF\lift{k}{\pf}\ul{\omega}_{\pf} - \tr_{\pf}\DIFF^k_{r,f}\ul{\omega}_f}^2
  + h_f^2 \norm{f}{\DIFF(\DIFF^k_{r,f}\ul{\omega}_f)}^2\nonumber\\
  &\lesssim h_f\sum_{f'\in\FM{d-1}(f)}\opn{f'}{\ul{\DIFF}^k_{r,f'}\ul{\omega}_{f'}}^2+\norm{f}{\DIFF^k_{r,f}\ul{\omega}_f}^2\nonumber\\
  \overset{\eqref{eq:opn.f.k+}}&= \opn{f}{\ul{\DIFF}^k_{r,f}\ul{\omega}_f}^2,
\label{eq:est.lift.chi}
\end{align}
where we have used the bound on $\DIFF\lift{k}{f'}\ul{\omega}_{f'}$, for $f'\in\FM{d-1}(f)$, coming from the induction hypothesis together discrete trace and inverse inequalities \cite[Lemmas 15 and 18]{Di-Pietro.Droniou.ea:25} in the second inequality.
The bound on $\psi$ given by \eqref{eq:def.psi}--\eqref{eq:rho.existence} follows from inverse inequalities and Proposition \ref{prop:petrov.galerkin}:
\[
\norm{f}{\psi}^2
\lesssim h_f^{-2} \norm{f}{\rho}^2
\lesssim \norm{f}{\chi}^2\overset{\eqref{eq:est.lift.chi}}\lesssim
\opn{f}{\ul{\DIFF}^k_{r,f}\ul{\omega}_f}^2.
\]
Combining this with \eqref{eq:est.lift.chi} shows that 
\begin{equation}\label{eq:est.lift.corr}
  \norm{f}{\liftc{k}{f}\ul{\omega}_f}^2
  \lesssim \opn{f}{\ul{\DIFF}^k_{r,f}\ul{\omega}_f}^2
\end{equation}
and thus, by Lemma \ref{lem:first.local.problem}, that $\lambda$ solution to \eqref{eq:lift.d>=k+2} satisfies
\[
\norm{f}{\lambda}^2
\lesssim h_f \norm{\pf}{\lift{k}{\pf}\ul{\omega}_{\pf}}^2
+ h_f^2 \norm{f}{\DIFF^k_{r,f}\ul{\omega}_f}^2
+ h_f^2 \opn{f}{\ul{\DIFF}^k_{r,f}\ul{\omega}_f}^2.
\]
Applying \eqref{eq:bound.domega} and the bound on $\lift{k}{f'}\ul{\omega}_{f'}$, for $f'\in\FM{d-1}(f)$, coming from the induction hypothesis yields $\norm{f}{\lambda}^2\lesssim \opn{f}{\ul{\omega}_f}^2$. Combined with \eqref{eq:est.lift.sigma} (which is still valid for $d\ge k+2$ by the same arguments as in the case $d=k+1$), this establishes \eqref{eq:lift.bound.0}.

The bound \eqref{eq:lift.bound.1} directly follows from $\DIFF\lift{k}{f}\ul{\omega}_f=\DIFF\lambda=\DIFF^k_{r,f}\ul{\omega}_f+\liftc{k}{f}\ul{\omega}_f$ and \eqref{eq:est.lift.corr}.


\section*{Acknowledgements}
Funded by the European Union (ERC Synergy, NEMESIS, project number 101115663).
Silvano Pitassi also acknowledges the funding of the European Union via the MSCA EffECT, project number 101146324.
Views and opinions expressed are however those of the authors only and do not necessarily reflect those of the European Union or the European Research Council Executive Agency. Neither the European Union nor the granting authority can be held responsible for them.


\printbibliography

\end{document}